\pdfoutput=1

\documentclass[11pt]{article}

\usepackage[longnamesfirst]{natbib} 
\usepackage{amsmath} 
\usepackage{amsfonts} 
\usepackage{amsthm}
\usepackage{amssymb}
\usepackage{graphicx} 
\usepackage{xcolor} 
\usepackage{xspace}
\usepackage{ifmtarg} 

\RequirePackage[colorlinks,citecolor=black,urlcolor=black]{hyperref} 
\hypersetup{pdfauthor={J.-P. Baudry},pdftitle={Estimation and Model Selection for Model-Based Clustering with the Conditional Classification Likelihood}}



\theoremstyle{plain}
\newtheorem{Def}{Definition}[section]
\newtheorem{The}{Theorem}[section]
\newtheorem{Cor}{Corollary}[section]

\newtheorem{Le}{Lemma}[section]
\newtheorem{Ex}{Example}[section]

\newenvironment{SkeProo}{\begin{proof}[Sketch of proof]}{\end{proof}}

\newcommand{\Rd}{\ensuremath{\mathbb{R}^d}\xspace}
\newcommand{\Sd}{\ensuremath{\mathbb{S}^d_+}\xspace}
\newcommand{\inv}[1]{{\ensuremath{\frac{1}{#1}}}\xspace}

\renewcommand{\exp}[1]{\ensuremath{\mbox{e}^{#1}}\xspace}
\newcommand{\rv}{random variable\xspace}
\newcommand{\prop}{\ensuremath{\pi}}
\newcommand{\Prop}{\ensuremath{\Pi}}
\newcommand{\lo}{\, \Big|\,}
\newcommand{\p}{\,.\,;}

\newcommand{\MLE}[1]{\ensuremath{\widehat{{#1}}^\text{MLE}}\xspace}
\newcommand{\MLccE}[1]{\ensuremath{\widehat{{#1}}^\text{MLccE}}\xspace}
\newcommand{\MAP}[1]{\ensuremath{\widehat{{#1}}^\text{MAP}}\xspace}

\newcommand{\ICL}{\ensuremath{\text{ICL}}\xspace}

\newcommand{\Lc}{\ensuremath{{\text{L}_\text{c}}}\xspace} 
\newcommand{\Lcc}{\ensuremath{{\log \text{L}_\text{cc}}}\xspace} 
\newcommand{\lcc}{\ensuremath{{\text{L}_\text{cc}}}\xspace}  
\newcommand{\Lo}{\ensuremath{\text{L}}\xspace}

\newcommand{\KL}{\ensuremath{d_\text{KL}}\xspace}

\newcommand{\dd}[3]{\ensuremath{\frac{\partial^2}{\partial #2^2}\left(#1\right)_{|#3}}\xspace}
\newcommand{\pop}{o_\mathbb{P}}
\newcommand{\gop}{O_\mathbb{P}}
\newcommand{\po}{o}

\makeatletter \newcommand{\E}[2]{\@ifmtarg{#2}{\mathbb{E}_{#1}}{\mathbb{E}_{#1}\left[ #2 \right]}} \makeatother 
\makeatletter \newcommand{\En}[1]{\@ifmtarg{#1}{\mathbb{E}_{X_1,\dots,X_n}}{\mathbb{E}_{X_1,\dots,X_n} \left[ #1 \right]}} \makeatother
\makeatletter \newcommand{\EX}[1]{\@ifmtarg{#1}{\mathbb{E}_X}{\mathbb{E}_X \left[ #1 \right]}} \makeatother
\makeatletter \newcommand{\EXn}[1]{\@ifmtarg{#1}{\mathbb{E}_X  \mathbb{E}_{X_1,\dots,X_n}}{ \mathbb{E}_X  \mathbb{E}_{X_1,\dots,X_n} \left[ #1 \right]}} \makeatother

\newcommand{\conv}[1]{\ensuremath{\text{conv}(#1)}\xspace}
\newcommand{\ie}{i.e.\@ }
\makeatletter \newcommand{\fig}[2][]{\@ifmtarg{#1}{Figure~\ref{#2}\xspace}{Figure~\ref{#2}~(#1)\xspace}} \makeatother

\newcommand{\sect}[1]{Section~\ref{#1}}
\newcommand{\sub}[1]{Section~\ref{#1}}
\makeatletter \newcommand{\HM}[1]{\@ifmtarg{#1}{\ensuremath{H^M_{\gamma, \Theta, r}}}{\ensuremath{H^M_{#1}}}} \makeatother
\makeatletter \newcommand{\HMp}[1]{\@ifmtarg{#1}{\ensuremath{H^{M'}_{\gamma, \Theta, r}}}{\ensuremath{H^{M'}_{#1}}}} \makeatother

\DeclareMathOperator*{\esssup}{ess\,sup}

\DeclareMathOperator*{\argmin}{argmin}
\DeclareMathOperator*{\argmax}{argmax}
\DeclareMathOperator{\ent}{ENT}
\DeclareMathOperator{\crit}{crit}
\DeclareMathOperator{\pen}{pen}
\DeclareMathOperator{\diam}{\text{diam}}
\DeclareMathOperator{\supp}{\text{supp}}

\newcommand{\supprok}[2][]{}   


\title{Estimation and Model Selection for Model-Based Clustering with the Conditional Classification Likelihood}

\author{Jean-Patrick Baudry\footnote{This research was partly supported by Universit\'e Paris XI, Laboratoire de Math\'ematiques d'Orsay UMR 8628; INRIA Saclay \^Ile-de-France, Projet {\sc select} and Universit\'e Paris Descartes, MAP5 UMR 8145.} \\ \emph{LSTA} \\ \emph{Universit\'e Pierre et Marie Curie - Paris VI}}


\begin{document}

\maketitle

\begin{abstract}

The Integrated Completed Likelihood (ICL) criterion has been proposed by \citet{BieCelGov00} in the model-based clustering framework to select a relevant number of classes and has been used by statisticians in various application areas. 

A theoretical study of this criterion is proposed. A contrast related to the clustering objective is introduced: the \emph{conditional classification likelihood}. This yields an estimator and a model selection criteria class. The properties of these new procedures are studied and ICL is proved to be an approximation of one of these criteria. 

We oppose these results to the current leading point of view about ICL, that it would not be consistent. Moreover these results give insights into the class notion underlying ICL and feed a reflection on the class notion in clustering.

General results on penalized minimum contrast criteria and on mixture models are derived, which are interesting in their own right.

\end{abstract}

\paragraph{Keywords} BIC; Bracketing entropy; Classification likelihood; Contrast minimization; ICL; Model-based clustering; Model selection; Mixture models; Number of classes; Penalized criteria

\section{Introduction}\label{icl.int}

Model-based clustering is introduced in Sections \ref{icl.int.gmm} and \ref{icl.int.mbc}. Our purpose is to better understand the behavior of the ICL model selection criterion of \citet{BieCelGov00}, which is presented in Section \ref{icl.int.icl}. 

The main topic of this work is the choice of the number of classes in a model-based clustering framework, and then the choice of the number of components of a Gaussian mixture. The interested reader may refer to \citet{TitSmiMak85} or \citet{MclPee00} for comprehensive studies on Gaussian mixture models. The last also provides an overview on the approaches for assessing the number of components, and particularly on the standard and widely used penalized likelihood criteria, such as AIC \citep{Aka73} or BIC \citep{Sch78}. 

The ICL criterion studied here is an alternative to BIC. It was up to now widely presented as a penalized likelihood criterion, which penalty involves an ``entropy'' term. Here, however, we prove that it is actually a penalized contrast criterion with a criterion which is different from the standard likelihood: this justifies why this is not surprising, nor a drawback, that ICL does not asymptotically select the ``true'' number of components, even when the ``true'' model is considered. Even for data arising from a mixture distribution, a relevant number of classes may differ from the true number of components of the mixture. 

The reason why we introduce this new contrast \lcc (\sub{icl.con.def}) is not that we believe it \emph{a priori} to be the better one for a clustering purpose, but rather that it enables to theoretically study and understand ICL.  We prove (\sub{icl.sel.icl}) that ICL is an approximation of a criterion linked to this contrast: studying further ICL then amounts to studying \lcc. The notion of class underlying ICL is proved to be a compromise between Gaussian mixture density estimation and a strictly ``cluster'' point of view (\sect{icl.dis}).

Let $X$ be a \rv in \Rd with distribution $f^\wp\cdot\lambda$ and $X_1,\dots,X_n$ an i.i.d. sample of the same distribution. Let us denote ${\bf X} = (X_1,\dots,X_n)$.

All proofs are gathered in \sect{icl.pro}.

\subsection{Gaussian Mixture Models}\label{icl.int.gmm}
$\mathcal{M}_K$ is the Gaussian mixture model with $K$ components:
\begin{equation*}
\mathcal{M}_K = \left\{ f(\p\theta) = \sum_{k=1}^K \prop_k \phi(\p \omega_k)\lo \theta = (\prop_1,\dots,\prop_K,\omega_1,\dots,\omega_K)\in \Theta_K \right\},
\end{equation*}
where $\phi$ is the Gaussian density and $\Theta_K \subset \Prop_K \times \left(\Rd\times\Sd\right)^K$ with $\Prop_K = \{(\prop_1,\dots,\prop_K) \in [0,1]^K : \sum_{k=1}^K \prop_k = 1 \}$ and $\Sd$ the set of positive definite $d\times d$ real matrices. Constraints on the model may be imposed by restricting $\Theta_K$. We typically have in mind the decomposition suggested by \cite{CelGov95}. ``General'' (no constraint) and ``diagonal'' (diagonal covariance matrices) models will be considered here as examples.

Those are studied here as parametric models. It is then assumed the existence of a parametrization $\varphi:\Theta_K\subset\mathbb{R}^{D_K}\rightarrow\mathcal{M}_K$. It is assumed that $\Theta_K$ and $\varphi$ are ``optimal'', in the sense that $D_K$ is minimal. $D_K$ is the number of \emph{free parameters} in the model $\mathcal{M}_K$ and is called the \emph{dimension} of $\mathcal{M_K}$. For example, at most $(K-1)$ mixing proportions need to be parametrized. 

It shall not be needed to assume the parametrization to be identifiable, \ie that $\varphi$ is injective. Indeed our purpose is twofold: identifying a relevant number of classes to be designed; and actually designing those classes. Theorem \ref{th.icl.selgmm} justifies that the first task can be achieved under a weaker ``identifiability'' assumption. Theorem \ref{th.icl.conestgmm} then guarantees that our estimator converges to the best parameters set, any of which is as good as the others. There will be no ``true parameter'' assumption. The classes can finally be defined through the MAP rule (see \sub{icl.int.mbc}). Practically, the parameters themselves are never the quantities of interest here. They only stand as a convenient notation and this is also why we expect that the assumption about the Fisher information (see Theorem~\ref{th.icl.selgmm}) is technical and could maybe be avoided with other techniques. Please refer to \citet[Chapter 4]{Bau09} for a more comprehensive discussion about the identifiability question.

\subsection{Model-Based Clustering}\label{icl.int.mbc}

Although the results are stated first for much more general situations, this paper is devoted to the question of clustering through Gaussian mixture models.

The process is standard \citep[see][]{FraRaf02}: 
\begin{itemize}
\item fit each considered mixture model;
\item select a model and a number of components based on the first step;
\item classify the observations through the MAP rule (recalled below) with respect to the mixture distribution fitted in the selected model.
\end{itemize}
Notably, the usual choice is made here, to identify a class with each fitted Gaussian component. The number of classes to be designed is then chosen at the second step. See for example \citet{Hen10} and \citet{BauCelRafal09} for alternative approaches.

Let us recall the MAP classification rule. It involves the \emph{conditional probabilities} of the components
\begin{equation*}
\forall \theta\in\Theta_K, \forall k, \forall x,\ \tau_k(x;\theta) = \frac{\prop_k \phi(x;\omega_k)}{\sum_{k'=1}^K \prop_{k'} \phi(x;\omega_{k'})}\cdot
\end{equation*}
$\tau_k(x;\theta)$ is the probability that $X$ arises from the $k^\text{th}$ component, conditionally to $X=x$, under the distribution defined by $\theta$. Let us also denote $\tau_{ik}(\theta) = \tau_k(X_i;\theta)$. The MAP classification rule for $x$ is then 
\begin{equation*}
\MAP{z}(\theta) = \argmax_{k\in\{1,\dots,K\}} \tau_k(x;\theta).
\end{equation*}
Let us denote by \Lo the \emph{observed likelihood} associated to ${\bf X}$:
\begin{equation*}
\forall\theta\in\Theta_K,\ \Lo(\theta;{\bf X}) = \prod_{i=1}^n \Bigl(\sum_{k=1}^K \prop_k\phi(X_i;\omega_k)\Bigr).
\end{equation*}
The maximum likelihood estimator in the model $\mathcal{M}_K$ is denoted by $\MLE\theta_K$.

\subsection{ICL}\label{icl.int.icl}

Our motivation is to better understand the ICL (Integrated Completed Likelihood) criterion. Let us introduce the \emph{classification likelihood} associated to the complete data sample $(\bf X,\bf Z)$ ($Z\in\{0,1\}^K$ is the unobserved label of $X$: $Z_k=1\Leftrightarrow$ $X$ arises from component $k$):
\begin{equation}\label{eq.Lc}
\forall\theta\in\Theta_K,\ \Lc\bigl(\theta;({\bf X},{\bf Z})\bigr)=\prod_{i=1}^n \prod_{k=1}^K \bigl(\prop_k \phi(X_i;\omega_k) \bigr)^{Z_{ik}}.
\end{equation}

To mimic the derivation of the BIC criterion \citep{Sch78} in a clustering framework, \cite{BieCelGov00} approximate the integrated classification likelihood through a Laplace's approximation. Then they assume that the classification likelihood mode can be identified with $\MLE\theta_K$ as $n$ is large enough and replace the unobserved $Z_{ik}$'s by their MAP estimators under $\MLE\theta_K$. This is questionable, notably when the components of $\MLE\theta_K$ are not well separated. They derive the ICL criterion:
\begin{equation*}
\crit_{\ICL} (K) = \log \Lo(\MLE\theta_K) + \sum_{i=1}^n \sum_{k=1}^K \MAP{Z}_{ik}(\MLE\theta_K) \log \tau_{ik}(\MLE\theta_K) - \frac{\log n}{2} D_K. 
\end{equation*}
\cite{MclPee00} replace the $Z_{ik}$'s by their conditional expectations $\tau_{ik}(\MLE\theta_K)$:
\begin{equation}\label{eq.icl.deficl}
\crit_{\ICL} (K) = \log \Lo(\MLE\theta_K) + \sum_{i=1}^n \sum_{k=1}^K \tau_{ik}(\MLE\theta_K) \log \tau_{ik}(\MLE\theta_K) - \frac{\log n}{2} D_K. 
\end{equation}
Both versions of the ICL appear to behave analogously, and the latter is considered from now on.

The ICL differs from the standard and widely used BIC criterion of \cite{Sch78} through the \emph{entropy} term (see \sub{icl.con.ent}):
\begin{equation}\label{eq.defEnt}
\forall \theta\in\Theta_K,\ \ent(\theta;{\bf X}) = - \sum_{i=1}^n \sum_{k=1}^K \tau_{ik}(\theta) \log \tau_{ik}(\theta).
\end{equation}

The BIC is known to be consistent, in the sense that it asymptotically selects the true number of components, at least when the true distribution actually lies in one of the considered models \citep{Ker00, Nis88}. This nice property may however not suit a clustering purpose. In many applications, there is no reason to assume that the distribution conditional on the (unobserved) labels $Z$ is Gaussian. The BIC in this case tends to overestimate the number of components since several Gaussian components are needed to approximate each non-Gaussian component of the true mixture distribution $f^\wp$. And the user may rather be interested in a \emph{cluster} notion ---~as opposed to this strictly \emph{component} approach~--- which also includes a separation notion and which be robust to non-Gaussian components. Of course, it depends on the application, and on what a \emph{class} should be. It may be of interest to discriminate into two different classes a group of observations which the best fit is reached with a mixture of two Gaussian components having quite different parameters (we particularly think of the covariance matrices parameters). BIC tends to do so. But it may also be more relevant and may conform to an intuitive notion of cluster, to identify two very close ---~or largely overlapping~--- Gaussian components as a single non-Gaussian shaped cluster (see for example \fig{fig.icl.ent})...

ICL has been derived with this viewpoint. It is widely understood and explained \citep[for instance in][]{BieCelGov00} as the BIC criterion with a supplemental penalty, which is the entropy (\sub{icl.con.ent}). Since the last penalizes models which maximum likelihood estimator yields an uncertain MAP classification, ICL is more robust than BIC to non-Gaussian components. However we do not think that the entropy should be considered as a penalty term and an other point of view will be developed in this paper.

The references here were found by browsing the result obtained from Google Scholar citations about \cite{BieCelGov00}. Only 3 pages of 16 have been studied...
The behavior of ICL has been studied through simulations and real data studies by \citet{BieCelGov00}, \citet[Section 6.11]{MclPee00}, \citet{SteRaf10} and in several simulation studies \citep[See][Chapter 4]{Bau09}. Besides several authors chose to use it for the mentioned reasons in various applications area: \cite{GouHansLipRos01} (fMRI images); \cite{PigGel05} (image collection automatic sorting); \cite{HamKenKro06} (protein structure prediction); \cite{GraSouFag06} (robots learning); \citet{MarRobVac10} (uncovering groups of nodes in valued graphs and application to host-parasite interaction networks in forest ecosystems analysis); \citet{RigLebRob12} (comparative genomic hybridization profile); etc. 

This practical interest for ICL lets us think that it meets an interesting notion of cluster, corresponding to what some users expect. But no theoretical study is available. Our main motivation is to go further in this direction. This leads to considering new estimation and model selection procedures for clustering, similar to ICL but for which the development of the underlying logic is driven to its conclusion, from the estimation step to the model selection step, instead of introducing the MLE. It is proved that ICL is an approximation of a criterion which is consistent for a particular loss function.

\section{A New Contrast: Conditional Classification Likelihood}\label{icl.con}

The contrast minimization framework turns out to be a fruitful approach. It enables to fully understand that ICL is not a penalized likelihood criterion, as opposed to the usual point of view. It should rather be linked to an other contrast: the \emph{conditional classification likelihood}. 

\subsection{Definition, Origin}\label{icl.con.def}

In a clustering context, the classification likelihood (see \eqref{eq.Lc}) is an interesting quantity but neither the labels $\bf Z$ are observed, nor we assume that they even exist (think of the case several models with different number of components are fitted: then at most one can correspond to the true number of classes). Beside the first-mentioned works of \citet{BieCelGov00}, \citet{BieGov97}, for example, already proposed to directly involve the classification likelihood to select the number of classes, by estimating the unobserved data. We propose here to consider its expectation conditional on the observed sample $\bf X$. In case there exists a true classification and a model with the true number of classes is considered, this conditional expectation can be interpreted as the quantity the closest to the classification likelihood, which can be considered given the available information. 

Let us report the following algebraic relation between \Lo and \Lc:
\begin{equation}\label{eq.fund}
\forall\theta\in\Theta_K,\ \log \Lc(\theta) = \log \Lo(\theta) + \sum_{i=1}^n\sum_{k=1}^K Z_{ik}\log\tau_{ik}(\theta).
\end{equation}
Then, denoting the conditional expectation of $\log\Lc(\theta)$ by $\Lcc(\theta)$ (for Conditional Classification log Likelihood),
\begin{align*}
\Lcc(\theta) &= \E{\theta} {\log \Lc(\theta)|{\bf X}}\\
&= \log \Lo(\theta) + \underbrace{\sum_{i=1}^n \sum_{k=1}^K \tau_{ik}(\theta)\log\tau_{ik}(\theta)}_{-\ent(\theta;{\bf  X})},
\end{align*}
which is obviously linked to the clustering objective. We consider in the following $-\Lcc$ as an empirical contrast to be minimized.

\subsection{Entropy}\label{icl.con.ent}

\Lcc differs from $\log \Lo$ through the \emph{entropy} (see \eqref{eq.defEnt}). 

The behavior of the entropy is based on the properties of the function $h : t\in [0,1] \longmapsto (-t\log t)$ (with $h(0)=0$).
\begin{figure}[ht]
\begin{minipage}[c]{0.4\linewidth}
\vspace{1cm}
	\begin{center}
		\vspace{-2.5em}
		\includegraphics[width=.95\linewidth,height=.95\linewidth]{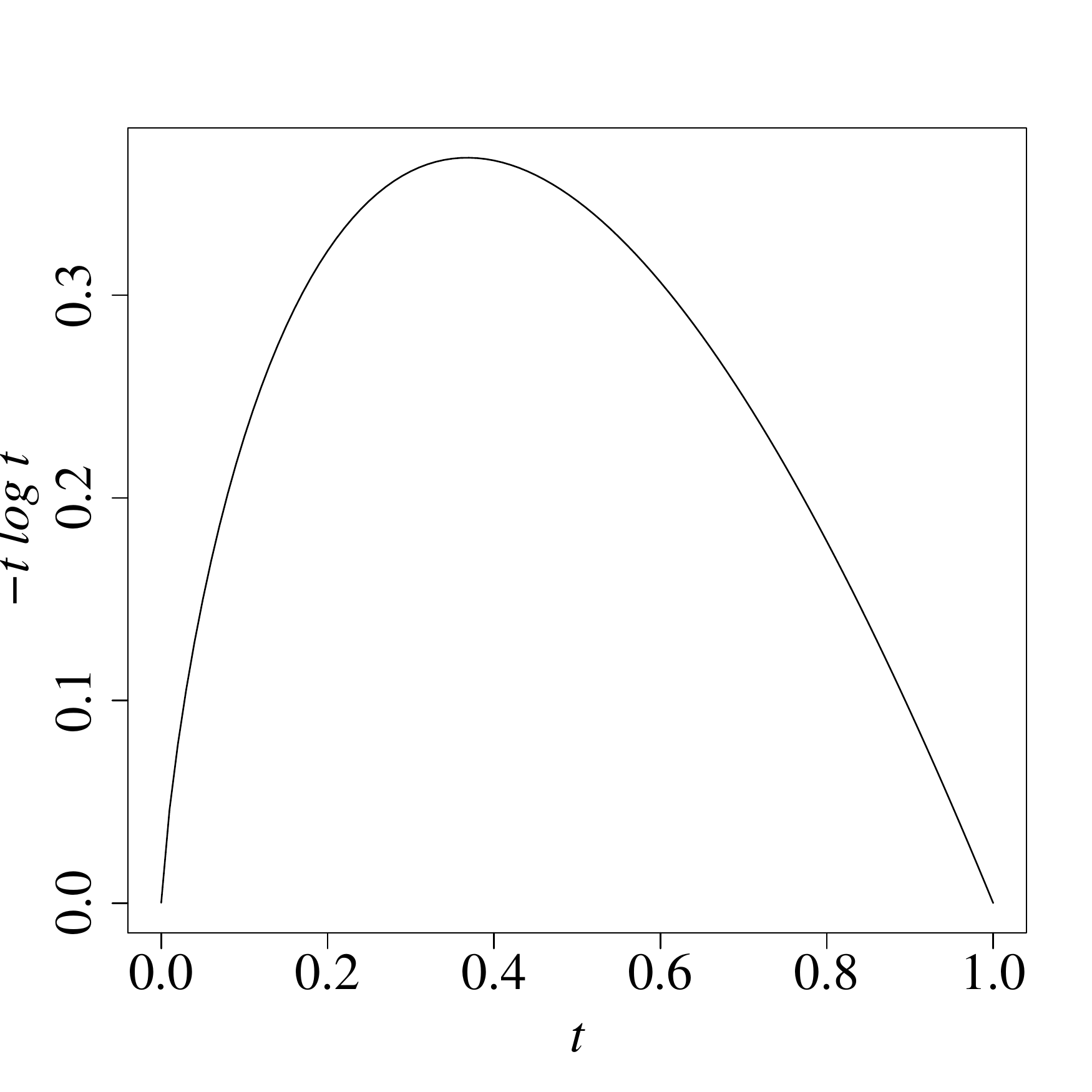}
		\vspace{-.5em}
		\caption{$h : t \mapsto -t\log t$}\label{fig.icl.h}
	\end{center}
\end{minipage}
\hspace{0.04\linewidth}
\begin{minipage}[c]{0.53\linewidth}
	\begin{center}
		\vspace{-1em}
		\includegraphics[width=.8\linewidth,angle=-90,viewport=0cm 1.5cm 21cm 29cm,clip]{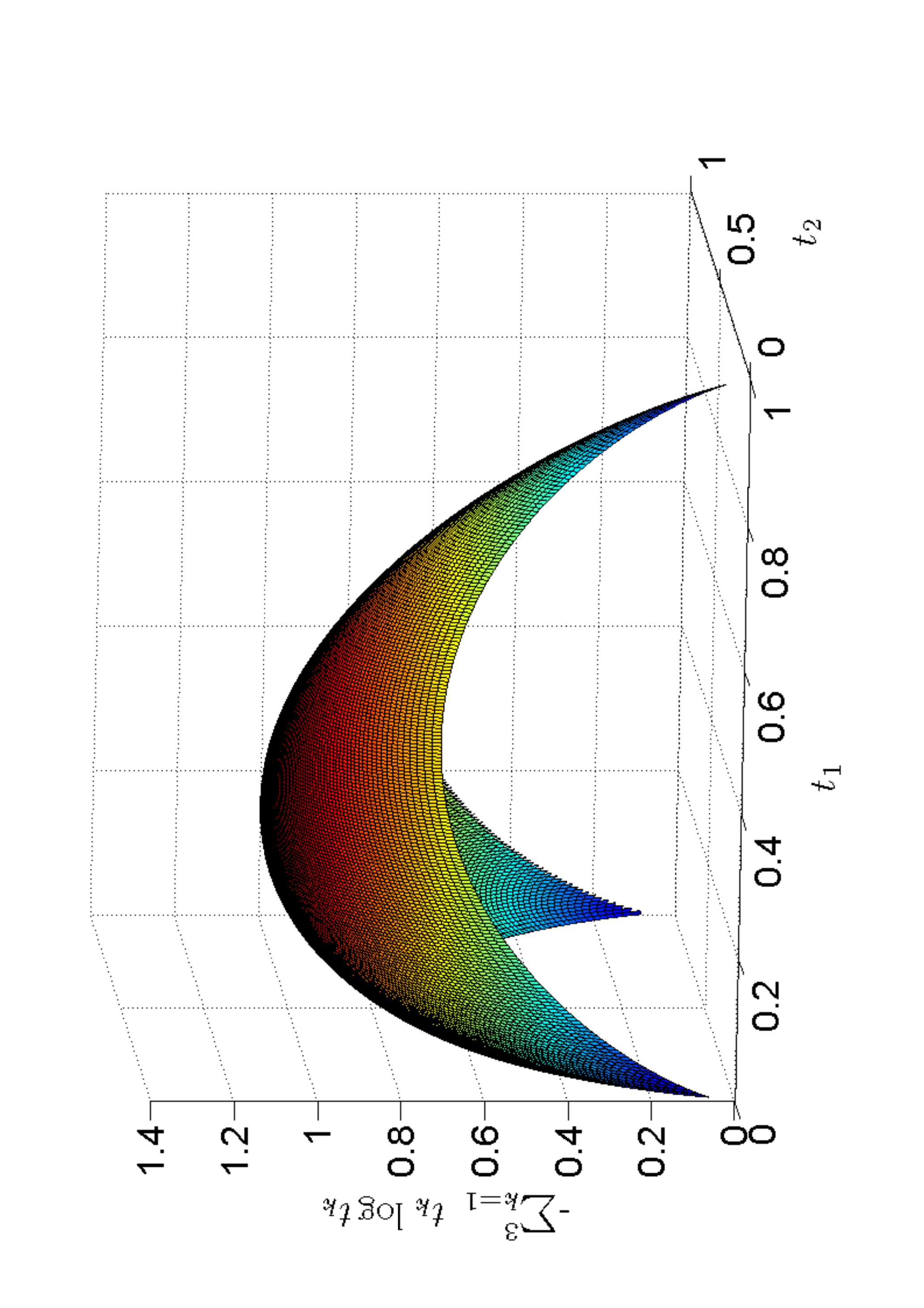}
		\vspace{-2em}
		\caption{$h_3 : (t_1,t_2,t_3) \in \Pi_3 \mapsto \sum_{k=1}^3 h(t_k)$}\label{fig.icl.hk}
	\end{center}
\end{minipage}
\end{figure}
This nonnegative function (see \fig{fig.icl.h}) takes zero value if and only if $t=0$ or $t=1$. It is continuous but not differentiable at $0$, and in particular it is not Lipschitz over $[0,1]$, which will be a cause of analysis difficulties. 
Let us also introduce the function $h_K : (t_1,\dots,t_K) \in \Prop_K \longmapsto \sum_{k=1}^K h(t_k).$
This nonnegative function (see \fig{fig.icl.hk}) then takes zero value if and only if there exists $k_0\in\{1,\dots,K\}$ such that $t_{k_0}=1$ and $t_k=0$ for $k\neq k_0$. It reaches its maximum value $\log K$ at $(t_1,\dots,t_K)=(\inv{K},\dots,\inv{K})$ (proof in \sect{icl.pro}).\label{hk} 

Now, the contribution $\ent(\theta;x_i)$ of a single observation to the total entropy $\ent(\theta;{\bf x})$ is considered. \fig{fig.icl.ent} represents a dataset simulated from a four-component Gaussian mixture. Let $\theta$ be such that $f(\p\theta)=f^\wp$. First, $\ent(\theta;x_i) \approx 0$ if and only if there exists $k_0$ such that $\tau_{i{k_0}} \approx 1$ and $\tau_{i{k}} \approx 0$ for $k\neq k_0$. There is no difficulty to classify $x_i$ in such a case (for example $x_{i_1}$). Second, $\ent(\theta;x_i)$ is all the greater that $(\tau_{i1},\dots,\tau_{iK})$ is closer to $(\inv{K},\dots,\inv{K})$, \ie that the classification through the MAP rule is uncertain. The worst case is reached as the conditional distribution over of the components $1,\dots,K$ is uniform. The observation $x_{i_2}$ for example has about the same posterior probability $\frac{1}{2}$ to arise from each one of the components surrounding it. Its individual entropy is about $\log 2$. 

In conclusion the individual entropy is a measure of the \emph{assignment confidence} of the considered observation through the MAP classification rule. The total entropy $\ent(\theta;{\bf x})$ is the empirical mean assignment confidence, and then measures the MAP classification quality for the whole sample.
\begin{figure}[ht]
\begin{center}
\def\figwidth{0.5\linewidth}
\vspace{-1em}
\includegraphics[width=\figwidth]{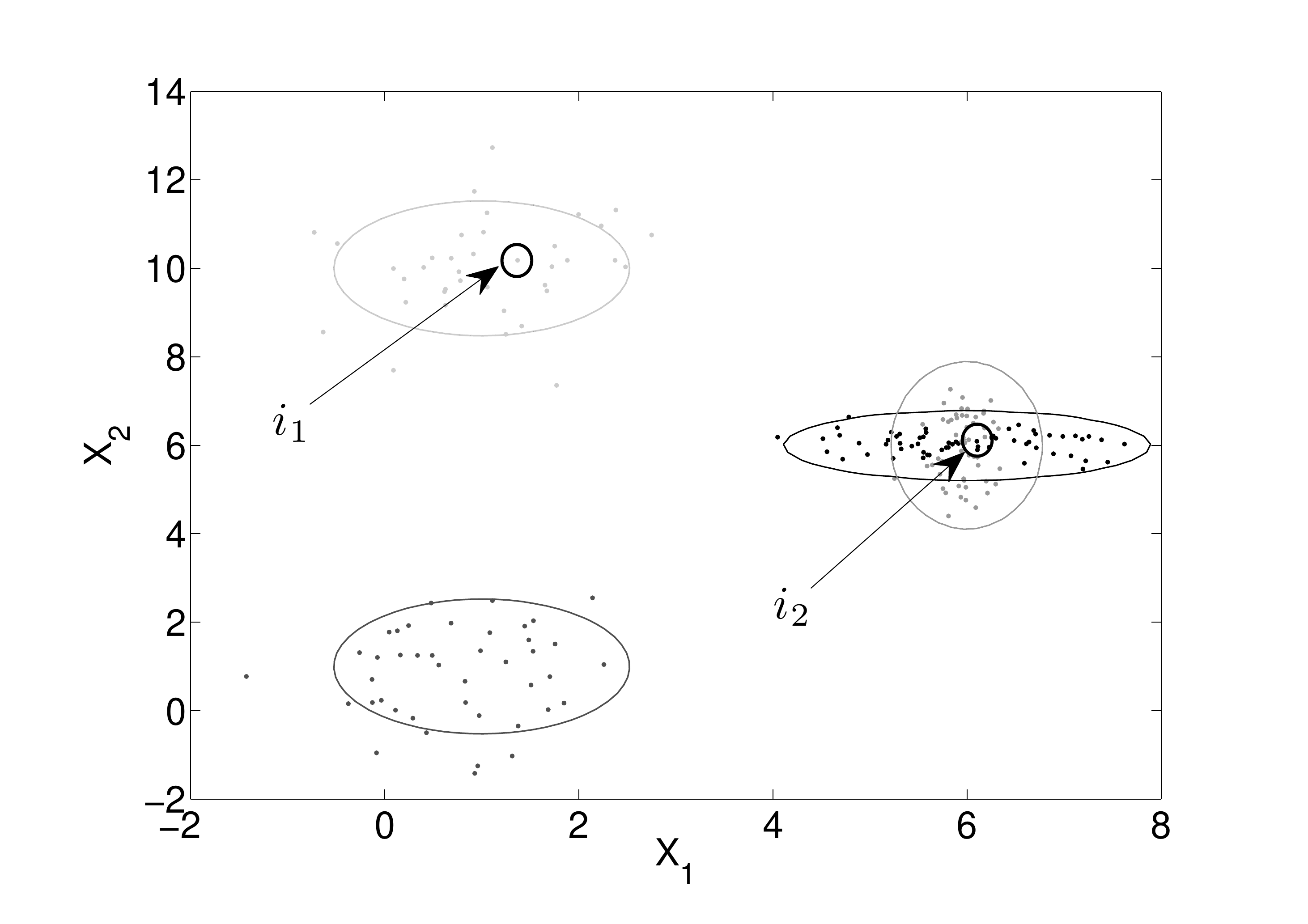}
\vspace{-1em}
\caption{A dataset example}\label{fig.icl.ent}
\vspace{-1em}
\end{center}
\end{figure}

Involving this quantity in a clustering study means that one expects the classification to be confident. The class notion underlying the choice of the conditional classification likelihood as a contrast is then a compromise between the fit (and then the idea of Gaussian-shaped classes) because of the likelihood term on the one hand, and the assignment confidence because of the entropy term on the other hand (which is rather a \emph{cluster} point of view).	

\subsection{$\Lcc$ as a Contrast}\label{icl.con.con}
See for example \citet{Mas07} for an introduction to contrast minimization. Let us consider the best distribution from the \lcc point of view in a model $\mathcal{M}_m=\{f(\p\theta):\theta\in\Theta_m\}$, namely the distribution minimizing the corresponding \emph{loss function}
\begin{equation*}
\theta_m \in \underbrace{\argmin_{\theta\in\Theta_m} \bigl\{ \KL\bigl(f^\wp,f(\p\theta)\bigr) + \E{f^\wp}{\ent(\theta;X)} \bigr\}}_{\underbrace{\argmin_{\theta\in\Theta_m} \E{f^\wp}{-\Lcc(\theta)}}_{\text{this set is denoted by }\Theta_m^0}}.
\end{equation*}
The existence of $\E{f^\wp}{-\Lcc(\theta)}$ is a very mild assumption. The non-emptiness of $\Theta_m^0$ may be guaranteed for example by assuming $\Theta_m$ to be compact. Let $K$ be fixed and consider the minimization of the loss function at hand in the model $\mathcal{M}_K$ (\sub{icl.int.gmm}). First of all, remark that $\Lcc=\log\Lo$ if $K=1$: $\Theta_1^0$ is the set of parameters of the distributions which minimize the Kullback-Leibler divergence to $f^\wp$. Now, if $K>1$, $\theta_K^0 \in \Theta_K^0$ may be close to minimizing the Kullback-Leibler divergence if the corresponding components do not overlap since then, the entropy is about zero. But if those components overlap, this is not the case anymore (Example \ref{ex.icl.ex1}).

To completely define the loss function, and to fully understand this framework, it is necessary to consider the \emph{best element of the universe} $\mathcal{U}$:
\begin{equation*}
\argmin_{\theta\in\,\mathcal{U}} \E{f^\wp}{-\Lcc(\theta)}.
\end{equation*}
The \emph{universe} $\mathcal{U}$ must be chosen with care. There is no natural relevant choice, on the contrary to the density estimation framework where the set of all densities may be chosen. First the considered contrast is well-defined in a parametric mixture setup, and not necessarily over any mixture densities set because of the definition of the entropy term involving the definition of each component. However, this would still enable to consider mixtures much more general than mixtures of Gaussian components. The ideas developed in \citet{BauCelRafal09} may for example suggest to involve mixtures which components are Gaussian mixtures. But this would not make sense. The mixture with one component which is a mixture of $K$ Gaussian components, and which then yields a single non-Gaussian-shaped class, always has a smaller $-\Lcc$ value than the corresponding Gaussian mixture yielding $K$ classes. This illustrates how carefully the components involved in the study must be chosen: involving for example any mixture of Gaussian mixtures means that one considers that a class may be almost anything and may notably contain two Gaussian-shaped clusters very far from each other! The components should in any case be chosen with respect to the corresponding cluster shape. The most natural is then to involve in the universe only Gaussian mixtures: $\mathcal{U}$ may be chosen as $\cup_{1\leq K\leq K_M}\mathcal{M}_K$.

\begin{Ex}\label{ex.icl.ex1} $f^\wp$ is the normal density $\mathcal{N}(0,1)$ ($d=1$). The model ${\mathcal{M}}_2=\{ \frac{1}{2} \phi(\p -\mu,\sigma^2) + \frac{1}{2} \phi(\p \mu,\sigma^2) ; \mu\in\mathbb{R},\ \sigma^2>0 \}$ is considered. 

Let us consider $\Theta^0_2$ in this most simple situation. 
We numerically obtain that $\Theta_2^0=\{(-\mu_0,\sigma_0^2),(\mu_0,\sigma_0^2)\}$, so that, up to a label switch, there exists a unique minimizer of $\E{f^\wp}{-\Lcc(\mu,\sigma^2)}$ in $\Theta_2$ in this case (see \fig{fig.icl.ex11}), with $\mu_0\approx 0.83$ and $\sigma_0^2\approx 0.31$. This solution is obviously not the same as the one minimizing the Kullback-Leibler divergence (see \fig{fig.icl.ex12}). This illustrates that the objective with the $-\Lcc$ contrast is not to recover the true distribution, even when it is available in the considered model. 

The necessity of choosing a relevant model is striking in this example: this two-component model should obviously not be used for a clustering purpose, at least for datasets with great enough size.

\begin{figure}[ht]
\begin{minipage}[c]{0.47\linewidth}
	\begin{center}
	    \includegraphics[width=\linewidth,height=0.9\linewidth]{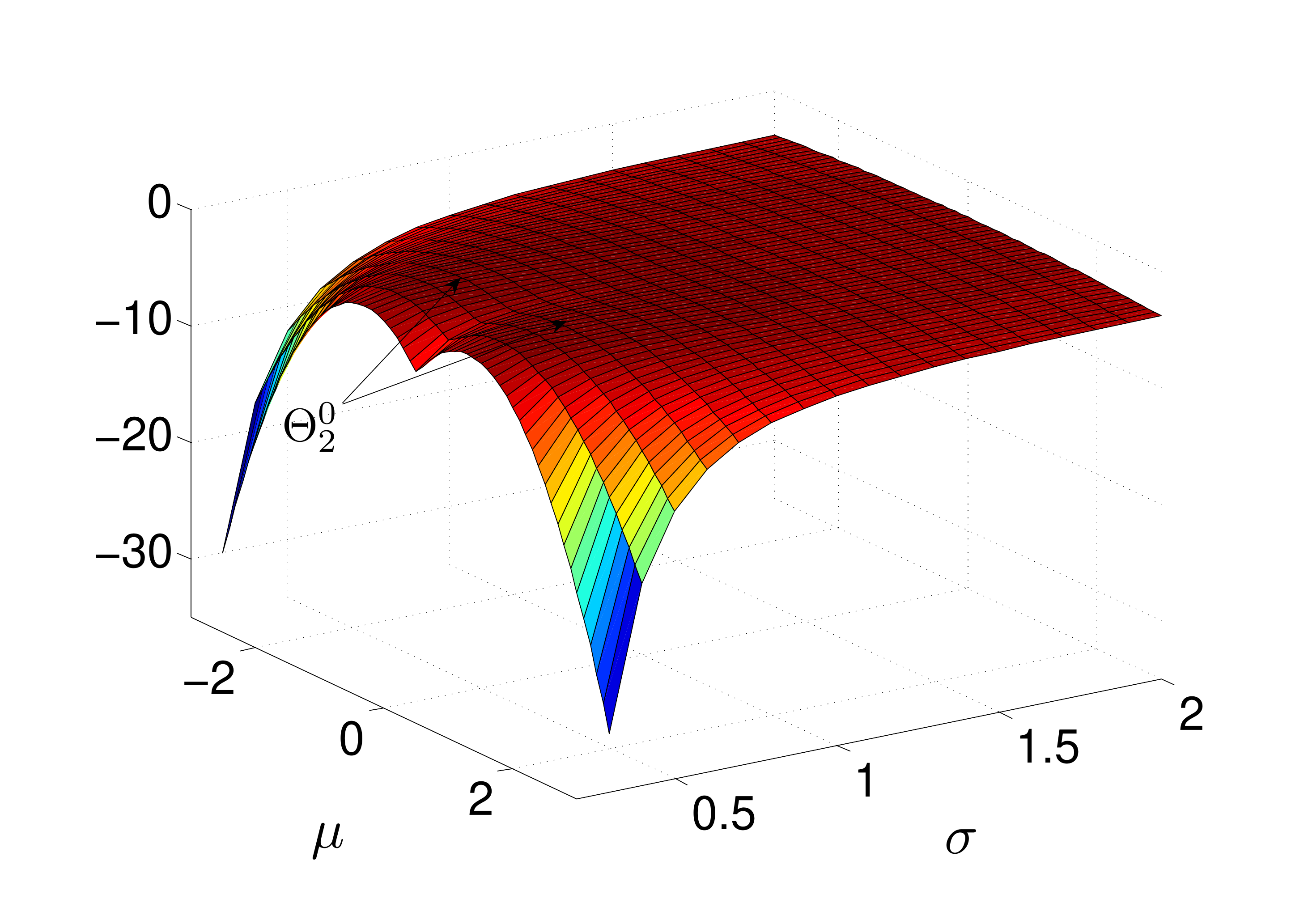}
		\vspace{-1em}
		\caption{$\E{f^\wp}{\Lcc(\mu,\sigma^2)}$ w.r.t. $\mu$ and $\sigma$, and $\Theta_K^0$, for Example \ref{ex.icl.ex1}}
		\label{fig.icl.ex11}
	\end{center}
\end{minipage}
\hspace{0.04\linewidth}
\begin{minipage}[c]{0.47\linewidth}
	\begin{center}
   \includegraphics[width=1\linewidth,height=0.8\linewidth,viewport=0.9cm 0cm 29.7cm 21cm, clip]{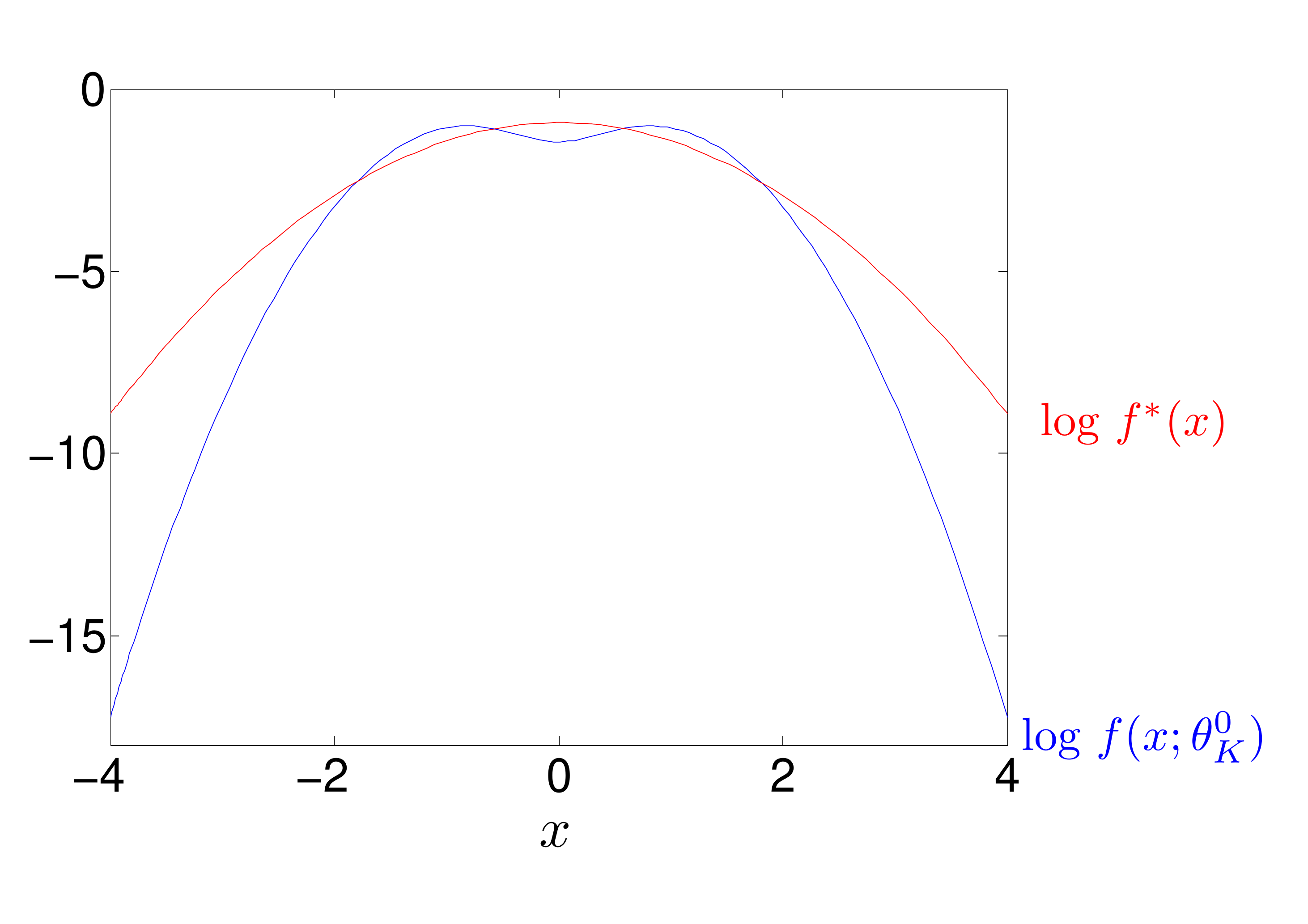}
		\vspace{-1em}
		\caption{$\log f^\wp$ (red, which is also $\log f(\p \theta_2^\text{KL})$) and $\log f(\p \theta_2^0)$ (blue) for Example \ref{ex.icl.ex1}}
		\label{fig.icl.ex12}
	\end{center}
\end{minipage}
\end{figure}
\vspace{-1em}
\end{Ex}

The estimator associated to the $-\Lcc$ contrast is now considered.

\section{Estimation: MLccE}\label{icl.est}

Let us fix the number of components $K$ and the model $\mathcal{M}_K$. The subscript $K$ is omitted in the notation of this section. A new minimum contrast estimator is considered. Results are stated in a general parametric model setting with a general contrast $\gamma$ and a model $\mathcal{M}$ with parameter space $\Theta\subset\mathbb{R}^D$, and then the conditions they involve are discussed in our framework. General conditions ensuring the consistency of such an estimator are given in Theorem \ref{th.icl.conest}. They notably involve the Glivenko-Cantelli property of the class of functions $\{\gamma(\theta):\theta\in\Theta\}$. Sufficient conditions in terms of bracketing entropy for this property to hold are recalled and verified in the considered context in \sub{icl.con.bra}. Those results are also useful in the study of the model selection step (\sect{icl.sel}). Brought together, they provide the consistency of the estimator in Gaussian mixture models: this is Theorem \ref{th.icl.conestgmm}. 

Here and hereafter, all expectations $\mathbb{E}$ and probabilities $\mathbb{P}$ are taken with respect to $f^\wp\cdot\lambda$. For a general contrast $\gamma$, we write its empirical version: $\gamma_n (\theta) = \inv{n} \sum_{i=1}^n \gamma(\theta;X_i)$. $\mathbb{R}^D$ is equipped with the infinite norm: $\forall\theta\in\mathbb{R}^D,\|\theta\|_\infty=\max_{1\leq i \leq D} |\theta_i|$. 
For any $r\in\mathbb{N}^*\cup\{\infty\}$ and for any $g:\mathbb{R}^d\rightarrow \mathbb{R}$, $\|g\|_r = \E{f^\wp}{|g(X)|^r}^\inv{r}$ if $r<\infty$ and $\|g\|_\infty = \esssup_{X\sim f^\wp}{|g(X)|}$ (recall that $\esssup_{Z\sim\mathbb{P}} Z = \inf\{z : \mathbb{P}[Z\leq z]=1 \}$ and thus: $\|g\|_\infty \leq \sup_{x\in\supp f^\wp} |g(x)|$). For any linear form $l: \mathbb{R}^D\rightarrow\mathbb{R}$, $\|l\|_\infty  = \underset{\|\theta\|_\infty=1}{\max} l(\theta)$. 

\subsection{Definition, Consistency}\label{icl.est.def}

The minimum contrast estimator is named MLccE (Maximum conditional classification Likelihood Estimator):
\begin{equation*}
\MLccE{\theta} \in \argmin_{\theta\in\Theta} -\Lcc(\theta).
\end{equation*}
To ensure its existence, we assume that $\Theta$ is compact. This is a heavy assumption, but it will be natural and necessary for the following results to hold. That the covariance matrices are bounded from below is a reasonable and necessary assumption in the Gaussian mixture framework: without this assumption, neither the log likelihood, nor the conditional classification likelihood would be bounded (for $K\geq 2$). Insights to choose lower bounds on the proportions and the covariance matrices are suggested in \citet[Section 5.1]{Bau09}. The upper bound on the covariance matrices and the compactness condition on the means, although not necessary in the standard likelihood framework, do not seem to be avoidable here (see \sub{icl.con.bra}). This is a consequence of the behavior of the entropy term as a component goes to zero. 

The following theorem, which is directly adapted from \citet[Section 5.2]{Vaa98}, gives sufficient conditions for the consistency of a minimum contrast estimator $\hat{\theta}$.  We write $\forall \theta\in\Theta, \forall \widetilde{\Theta}\subset\Theta, d(\theta,\widetilde{\Theta}) = \underset{\widetilde\theta\in\widetilde\Theta}{\inf} \|\theta-\widetilde\theta\|_\infty$.
\begin{The}\label{th.icl.conest} Let $\Theta\subset\mathbb{R}^D$ and $\gamma:\Theta\times\mathbb{R}^d\longrightarrow \mathbb{R}$. Assume:
\begin{align*}\tag{A1}\label{icl.a1}
\exists& \theta^0\in\Theta \text{ such that } \E{f^\wp}{\gamma(\theta^0)}=\min_{\theta\in\Theta}\E{f^\wp}{\gamma(\theta)}\text{($\Leftrightarrow \Theta^0$ is not empty)}
\end{align*}
\begin{equation}\tag{A2}\label{icl.a2}
\forall \varepsilon > 0,\ \inf_{\{\theta: d(\theta,\Theta^0)>\varepsilon \}} \E{f^\wp}{\gamma(\theta)} > \E{f^\wp}{\gamma(\theta^0)}
\end{equation} 
\begin{equation}\tag{A3}\label{icl.a3}
\sup_{\theta\in\Theta}{\Bigl| \gamma_n(\theta)-\E{f^\wp}{\gamma(\theta)} \Bigr|} \xrightarrow{\ \mathbb{P}\ } 0
\end{equation}
Define $\forall n,\hat\theta=\hat\theta(X_1,\dots,X_n) \in \Theta\text{ such that }\gamma_n(\hat\theta) \leq \gamma_n(\theta^0) + \pop(1).$\\
Then $d(\hat\theta,\Theta^0) \xrightarrow[n\rightarrow\infty]{\mathbb{P}} 0.$
\end{The}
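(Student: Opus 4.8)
The plan is to run the classical consistency argument for minimum‑contrast (M‑)estimators, essentially as in \citet[Section~5.2]{Vaa98}. Throughout this proof write $M(\theta)=\E{f^\wp}{\gamma(\theta)}$ for the population contrast, so that \eqref{icl.a1} states that $M$ attains its minimum over $\Theta$ at $\theta^0$, \eqref{icl.a2} states that this minimum is well separated from the argmin set $\Theta^0$, and \eqref{icl.a3} is the uniform‑over‑$\Theta$ convergence in probability of the empirical contrast $\gamma_n$ to $M$. The strategy is to dominate the population excess loss $M(\hat\theta)-M(\theta^0)$ by the uniform deviation $\sup_{\theta\in\Theta}|\gamma_n(\theta)-M(\theta)|$ together with the near‑minimality slack of $\hat\theta$, and then to convert a small excess loss into a small distance $d(\hat\theta,\Theta^0)$ by means of \eqref{icl.a2}.

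First I would fix $\varepsilon>0$; there is nothing to prove for that $\varepsilon$ if $\{\theta\in\Theta:d(\theta,\Theta^0)>\varepsilon\}$ is empty, so assume it is not and put $\eta:=\inf_{\{\theta:d(\theta,\Theta^0)>\varepsilon\}}M(\theta)-M(\theta^0)$, which is strictly positive by \eqref{icl.a2}. As an infimum is a lower bound, every $\theta\in\Theta$ with $d(\theta,\Theta^0)>\varepsilon$ then satisfies $M(\theta)\ge M(\theta^0)+\eta$. Next I would write the elementary decomposition
\[
M(\hat\theta)-M(\theta^0)=\bigl(M(\hat\theta)-\gamma_n(\hat\theta)\bigr)+\bigl(\gamma_n(\hat\theta)-\gamma_n(\theta^0)\bigr)+\bigl(\gamma_n(\theta^0)-M(\theta^0)\bigr).
\]
The first and third brackets are each at most $\sup_{\theta\in\Theta}|\gamma_n(\theta)-M(\theta)|$ in absolute value, while the middle bracket is $\le\pop(1)$ by the defining inequality $\gamma_n(\hat\theta)\le\gamma_n(\theta^0)+\pop(1)$; hence
\[
M(\hat\theta)-M(\theta^0)\le 2\sup_{\theta\in\Theta}\bigl|\gamma_n(\theta)-M(\theta)\bigr|+\pop(1),
\]
and the right‑hand side converges to $0$ in probability by \eqref{icl.a3}.

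To conclude I would combine these two facts into the event inclusion
\[
\bigl\{d(\hat\theta,\Theta^0)>\varepsilon\bigr\}\subset\bigl\{M(\hat\theta)-M(\theta^0)\ge\eta\bigr\}\subset\Bigl\{2\sup_{\theta\in\Theta}|\gamma_n(\theta)-M(\theta)|+\pop(1)\ge\eta\Bigr\},
\]
whose last member has probability tending to $0$ since $\eta>0$ is fixed; as $\varepsilon>0$ was arbitrary, $d(\hat\theta,\Theta^0)\xrightarrow{\mathbb{P}}0$. I do not expect any genuine obstacle within this argument --- it is bookkeeping once $M$ and the events are in place; the only points needing a word are that $\hat\theta$ is taken to be a measurable function of the sample (so that $d(\hat\theta,\Theta^0)$, a $1$‑Lipschitz function of $\hat\theta$, is a bona fide random variable) and that \eqref{icl.a2} is used in the form ``an infimum strictly above $M(\theta^0)$ forces a uniform gap $\eta$ off the $\varepsilon$‑neighbourhood of $\Theta^0$''. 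The real difficulty in \emph{applying} the theorem is pushed entirely onto hypothesis \eqref{icl.a3}, the Glivenko--Cantelli property of $\{\gamma(\theta):\theta\in\Theta\}$: for the $-\Lcc$ contrast, whose entropy part fails to be Lipschitz as a component vanishes, this is exactly the statement the bracketing‑entropy bounds of \sub{icl.con.bra} are designed to establish.
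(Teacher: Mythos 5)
Your proof is correct and follows essentially the same route as the paper's: the same three-term decomposition of $\E{f^\wp}{\gamma(\hat\theta)}-\E{f^\wp}{\gamma(\theta^0)}$, the same use of \eqref{icl.a2} to produce a fixed gap $\eta>0$, and the same conclusion via \eqref{icl.a3}. The only differences are presentational (you make the event inclusion and the measurability remark explicit).
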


The strong consistency holds if \eqref{icl.a3} is replaced by an almost sure convergence (this is the case under the conditions we are to define) and if the inequality in the definition of $\hat\theta$ holds almost surely.

Assumption \eqref{icl.a1} is the least that can be expected. It is guaranteed if the parameter space is compact.

Assumption \eqref{icl.a2} holds, too, under this compactness assumption: since $\theta\in\Theta_K \mapsto \E{f^\wp}{\gamma(\theta)}$ reaches its minimum value on the compact $\Theta_K \backslash \{\theta\in\Theta_K : d(\theta,\Theta_K^0)>\varepsilon\}$, it is necessarily strictly greater than $\E{f^\wp}{\gamma(\theta^0)}$.

Assumption \eqref{icl.a3} is a bit strong but it will be guaranteed under the compactness assumption through bracketing entropy arguments in \sub{icl.con.bra}. 

\begin{SkeProo} The assumptions guarantee a convenient situation. With great probability as $n$ grows, from \eqref{icl.a3}, $\gamma_n(\theta)$ is uniformly close to $\E{f^\wp}{\gamma(\theta)}$: this holds for $\hat\theta$ and $\theta^0$. Then, from the definition of $\hat\theta$, $E_{f^\wp}\bigl[\gamma(\hat\theta)\bigr]$ cannot be much larger than $\E{f^\wp}{\gamma(\theta^0)}$ which reaches the minimal value. By \eqref{icl.a2}, this implies that $\hat\theta$ cannot be far from $\Theta^0$. 

\end{SkeProo}

Let us apply Theorem~\ref{th.icl.conest} to Gaussian mixtures, with $\gamma=-\Lcc$ and $\Theta=\Theta_K$. The two following hypotheses will be involved:
\begin{gather}
\|M\|_r < \infty \text{ with } M(x) = \sup_{\theta\in\Theta} \left| \gamma(\theta;x) \right| < \infty\ f^\wp d\lambda \text{-a.e.} \tag{\HM{}}\label{HM}\\
\|M'\|_r < \infty \text{ with } M'(x) = \sup_{\theta\in\Theta} {\biggl\| \left( \frac{\partial \gamma}{\partial \theta} \right)_{(\theta;x)} \biggr\|_\infty}  <  \infty\ f^\wp d\lambda \text{-a.e.} \tag{\HMp{}}\label{HM'}
\end{gather}

\begin{The}[Weak Consistency of MLccE, compact case]\label{th.icl.conestgmm}
Let $\mathcal{M}$ be a Gaussian mixture model with compact parameter space $\Theta\subset\mathbb{R}^D$.  Let $\Theta^0=\argmin_{\theta\in\Theta}\E{f^\wp}{-\Lcc(\theta)}$. Let $\Theta^\mathcal{O}\subset\mathbb{R}^{D}$ open over which $\Lcc$ is defined, such that $\Theta\subset\Theta^\mathcal{O}$ and assume that \HMp{\Lcc,\Theta^\mathcal{O},1} holds.\\
Let $\MLccE\theta\in\Theta$ be an estimator (almost) maximizing \Lcc:
\begin{equation*}
\forall \theta^0\in\Theta^0, \forall n\in\mathbb{N^*}, -\Lcc(\MLccE\theta)\leq -\Lcc(\theta^0) + \pop(n).
\end{equation*}
Then $d(\MLccE\theta,\Theta^0)\xrightarrow[n\longrightarrow\infty]{\mathbb{P}} 0.$
\end{The}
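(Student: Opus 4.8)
The plan is to derive the statement from Theorem~\ref{th.icl.conest}, applied with $\gamma$ the per-observation version of $-\Lcc$ (so that $\gamma(\theta;x) = -\log f(x;\theta) + \ent(\theta;x)$, $\ent(\theta;x) = -\sum_k \tau_k(x;\theta)\log\tau_k(x;\theta)\ge 0$, and $n\,\gamma_n(\theta) = -\Lcc(\theta)$ on the sample $\mathbf X$) and with parameter space $\Theta$. The work then reduces to checking the three hypotheses \eqref{icl.a1}, \eqref{icl.a2}, \eqref{icl.a3} under compactness of $\Theta$ and \HMp{\Lcc,\Theta^\mathcal{O},1}, and to matching the defining inequality of $\MLccE\theta$ with that of $\hat\theta$ in Theorem~\ref{th.icl.conest}.

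First I would extract from \HMp{\Lcc,\Theta^\mathcal{O},1} a Lipschitz-in-parameter control: for $\theta,\theta'\in\Theta$ close enough that the segment $[\theta,\theta']$ lies in the open set $\Theta^\mathcal{O}$ (possible by compactness of $\Theta$), the mean value inequality gives $\bigl|\gamma(\theta;x)-\gamma(\theta';x)\bigr| \le M'(x)\,\|\theta-\theta'\|_\infty$ with $M'\in L^1(f^\wp)$. Together with finiteness of $\theta\mapsto\E{f^\wp}{\gamma(\theta)}$ at one point (the mild assumption recalled in \sub{icl.con.con}) and the pointwise bound $0\le\ent(\theta;x)\le\log K$, this controls the whole family $\{\gamma(\theta;\cdot):\theta\in\Theta\}$, and dominated convergence yields continuity of $\theta\mapsto\E{f^\wp}{\gamma(\theta)}$ on $\Theta$. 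Granted that continuity, \eqref{icl.a1} and \eqref{icl.a2} follow from compactness exactly as noted after Theorem~\ref{th.icl.conest}: the map attains its infimum on the compact $\Theta$ (so $\Theta^0\neq\emptyset$), and it attains its infimum on the compact set $\{\theta:d(\theta,\Theta^0)\ge\varepsilon\}$, where it is necessarily strictly greater than $\E{f^\wp}{\gamma(\theta^0)}$.

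The heart of the proof is \eqref{icl.a3}, the uniform law of large numbers $\sup_{\theta\in\Theta}\bigl|\gamma_n(\theta)-\E{f^\wp}{\gamma(\theta)}\bigr|\xrightarrow{\mathbb P}0$, and this is where I would invoke the bracketing-entropy material of \sub{icl.con.bra}. The Lipschitz-in-$\theta$ domination by $M'\in L^1(f^\wp)$ together with total boundedness of the compact $\Theta$ make $\{\gamma(\theta;\cdot):\theta\in\Theta\}$ a Glivenko--Cantelli class: taking a $\delta$-net $\theta^{(1)},\dots,\theta^{(N_\delta)}$ of $\Theta$, the pairs $\bigl[\gamma(\theta^{(j)};\cdot)-\delta M'(\cdot),\,\gamma(\theta^{(j)};\cdot)+\delta M'(\cdot)\bigr]$ form finitely many $L^1$-brackets of size $2\delta\|M'\|_1$, so $N_{[\,]}\bigl(\epsilon,\{\gamma(\theta):\theta\in\Theta\},L^1(f^\wp)\bigr)<\infty$ for every $\epsilon>0$, which is the classical sufficient condition (and in fact yields almost sure convergence, hence strong consistency). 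The delicate point --- and the reason the upper bounds on the covariances and the compactness of the means are imposed --- is precisely that this domination \HMp{} holds at all: the entropy is built from $h(t)=-t\log t$, which fails to be Lipschitz at $0$, so \HMp{} is a genuine statement that must be verified, and that verification, not a triviality, is the content of \sub{icl.con.bra}.

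Finally, dividing the estimator's defining inequality $-\Lcc(\MLccE\theta)\le-\Lcc(\theta^0)+\pop(n)$ by $n$ gives $\gamma_n(\MLccE\theta)\le\gamma_n(\theta^0)+\pop(1)$, so $\MLccE\theta$ is an admissible $\hat\theta$ in Theorem~\ref{th.icl.conest}; applying that theorem yields $d(\MLccE\theta,\Theta^0)\xrightarrow{\mathbb P}0$, which is the claim. The main obstacle throughout is \eqref{icl.a3}, reduced as above to the finiteness of the $L^1$-bracketing numbers, which itself rests on the domination hypothesis \HMp{}.
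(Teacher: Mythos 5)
Your proposal is correct and follows essentially the same route as the paper: reduce to Theorem~\ref{th.icl.conest}, obtain \eqref{icl.a1} and \eqref{icl.a2} from compactness and the continuity of $\theta\mapsto\E{f^\wp}{-\Lcc(\theta)}$, and obtain \eqref{icl.a3} from the finiteness of the $L^1$-bracketing numbers via the mean-value domination by $M'$ (your $\delta$-net bracket construction is precisely the content of Lemmas~\ref{le.icl.Snconv} and~\ref{le.icl.Sncomp}, including the local-convexity device for a non-convex compact $\Theta$ inside $\Theta^\mathcal{O}$), followed by the Glivenko--Cantelli theorem. No gaps.
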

\HMp{\Lcc,\Theta^\mathcal{O},1} results from lemma \ref{le.icl.Sncomp} and shall be discussed in \sub{icl.con.bra}. 

Under the compactness assumption, $\MLccE\theta$ is then consistent. It is even strongly consistent if it minimizes the empirical contrast almost surely. Let us highlight that it then converges to the set of parameters minimizing the loss function, which has no reason to contain the true distribution ---~except for $K=1$~--- even if the last lies in $\mathcal M$. 

\subsection{Bracketing Entropy and Glivenko-Cantelli Property}\label{icl.con.bra}
Recall a class of functions over $\Rd$ is $\mathbb{P}$-\emph{Glivenko-Cantelli}, with $\mathbb{P}$ a probability measure over \Rd, if it fulfills a uniform law of large numbers for the distribution $\mathbb{P}$.  A sufficient condition for a family $\mathcal{G}$ to be $\mathbb{P}$-Glivenko-Cantelli is that it is not too complex, which can be measured through its \emph{entropy with bracketing}:
\begin{Def}[$L_r(\mathbb{P})$-entropy with bracketing] Let $r\in\mathbb{N}^*$ and $l,u\in L_r(\mathbb{P})$. The bracket $[l,u]$ is the set of all functions $g\in\mathcal{G}$ with $l \leq g \leq u$. $[l,u]$ is an $\varepsilon$-bracket if $\|l-u\|_r\leq\varepsilon$. The bracketing number $N_{[\,]}(\varepsilon,\mathcal{G},L_r(\mathbb{P}))$ is the minimum number of $\varepsilon$-brackets needed to cover $\mathcal{G}$. The entropy with bracketing $\mathcal{E}_{[\,]}(\varepsilon,\mathcal{G},L_r(\mathbb{P}))$ of $\mathcal{G}$ with respect to $\mathbb{P}$ is the logarithm of $N_{[\,]}(\varepsilon,\mathcal{G},L_r(\mathbb{P}))$.
\end{Def}
It is quite natural that the behavior of all functions lying inside an $\varepsilon$-bracket can be uniformly controlled by the behavior of the extrema of the bracket. If those endpoints belong to $L_1(\mathbb{P})$, they fulfill a law of large numbers, and if the number of them needed to cover $\mathcal{G}$ is finite, then this is no surprise that $\mathcal{G}$ can be proved to fulfill a uniform law of large numbers:
\begin{The}
\begin{sloppypar}
Every class $\mathcal{G}$ of measurable functions such that $\mathcal{E}_{[\,]}(\varepsilon,\mathcal{G},L_1(\mathbb{P}))<\infty$ for every $\varepsilon>0$ is $\mathbb{P}$-Glivenko-Cantelli.
\end{sloppypar}
\end{The}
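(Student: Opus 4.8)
The plan is the classical bracketing reduction: control the uniform fluctuation of the empirical mean over the whole class $\mathcal{G}$ by an ordinary law of large numbers applied to the finitely many bracket endpoints. Throughout I write $\mathbb{P}_n g = \frac{1}{n}\sum_{i=1}^n g(X_i)$ for the empirical mean and $\mathbb{P}g = \int g\,d\mathbb{P}$ for the true mean.

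First I would fix $\varepsilon>0$ and use the hypothesis directly: since $\mathcal{E}_{[\,]}(\varepsilon,\mathcal{G},L_1(\mathbb{P}))<\infty$, there exist finitely many $\varepsilon$-brackets $[l_1,u_1],\dots,[l_p,u_p]$ with $l_j,u_j\in L_1(\mathbb{P})$, $\|u_j-l_j\|_1\le\varepsilon$, whose union covers $\mathcal{G}$. For $g\in\mathcal{G}$, choosing $j$ with $l_j\le g\le u_j$ pointwise and using $\mathbb{P}(u_j-l_j)\le\varepsilon$ gives the two-sided bound
\[
\mathbb{P}_n l_j-\mathbb{P}l_j-\varepsilon \;\le\; \mathbb{P}_n g-\mathbb{P}g \;\le\; \mathbb{P}_n u_j-\mathbb{P}u_j+\varepsilon .
\]
Taking $\sup_{g\in\mathcal{G}}$ and then bounding by the maximum over the $p$ indices yields
\[
\sup_{g\in\mathcal{G}}\bigl|\mathbb{P}_n g-\mathbb{P}g\bigr| \;\le\; \max_{1\le j\le p}\Bigl(|\mathbb{P}_n u_j-\mathbb{P}u_j|\vee|\mathbb{P}_n l_j-\mathbb{P}l_j|\Bigr)+\varepsilon .
\]

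Next, each of the $2p$ functions $l_j,u_j$ is a fixed element of $L_1(\mathbb{P})$, so the strong law of large numbers gives $\mathbb{P}_n u_j-\mathbb{P}u_j\to0$ and $\mathbb{P}_n l_j-\mathbb{P}l_j\to0$ almost surely; since the maximum is over finitely many indices, the right-hand side above tends to $\varepsilon$ almost surely. Hence $\limsup_n\sup_{g\in\mathcal{G}}|\mathbb{P}_n g-\mathbb{P}g|\le\varepsilon$ a.s., and letting $\varepsilon$ run through a sequence decreasing to $0$ gives $\sup_{g\in\mathcal{G}}|\mathbb{P}_n g-\mathbb{P}g|\to0$ a.s., which is precisely the $\mathbb{P}$-Glivenko-Cantelli property (and implies in particular the convergence in probability required in \eqref{icl.a3}).

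I do not expect a genuine obstacle here; the one delicate point is that $\sup_{g\in\mathcal{G}}|\mathbb{P}_n g-\mathbb{P}g|$ need not be measurable for an uncountable $\mathcal{G}$, but the last displayed inequality dominates it by an honestly measurable random variable with the stated limit, so the conclusion holds with outer probability one — which is all that is needed. Moreover, in every application we make of this theorem, $\mathcal{G}=\{\gamma(\theta,\cdot):\theta\in\Theta\}$ is indexed by a separable $\Theta$ through a map continuous in $\theta$, so the supremum may be computed over a countable dense subset and is genuinely measurable, removing even this caveat.
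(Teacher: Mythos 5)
Your proof is correct and is precisely the classical bracketing argument; the paper does not prove this theorem itself but defers to van der Vaart (1998, Chapter~19, Theorem~19.4), whose proof is the same two-sided bracket bound followed by the strong law of large numbers applied to the finitely many endpoints. Your closing remark on measurability (outer almost-sure convergence, removable here because $\mathcal{G}$ is indexed continuously by a separable $\Theta$) matches the standard treatment and is the right caveat.
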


The reader is referred to \citet[Chapter 19]{Vaa98} for accurate definitions and a proof of this result. This is a generalization of the usual Glivenko-Cantelli theorem. We shall prove that the class of functions $\{\gamma(\p\theta) : \theta\in\Theta_K\}$ has finite $\varepsilon$-bracketing entropy for any $\varepsilon>0$ and the assumption \eqref{icl.a3} will be ensured.

From now on, since $\Theta$ is typically assumed to be compact, it is assumed that $\Theta\subset\Theta^\mathcal{O}\subset\mathbb{R}^D$ with $\Theta^\mathcal{O}$ open over which $\gamma$ is defined and $C^1$ for $f^\wp d\lambda$-almost all $x$. This is no problem for Gaussian mixture models with \Lcc (or the standard likelihood by the way), for example with the general or diagonal model. But this requires (with the \Lcc contrast) the proportions to be positive. Actually, this could be avoided here, but we will need this assumption for the definition of $M'$ (Hypothesis \ref{HM'}). As already mentioned, components going to zero must be avoided. For the same technical reason, we have to assume the mean parameters to be bounded. 

Lemma \ref{le.icl.Snconv} guarantees that the bracketing entropy of $\{\gamma(\p\theta) : \theta\in\Theta\}$ is finite for any $\varepsilon$, if $\Theta$ is convex and bounded. The assumption about the differential of the contrast is not a difficulty in our framework, provided that non-zero lower bounds over $\Theta$ on the proportions and the covariance matrices are imposed. The lemma is written for any $\tilde\Theta$ bounded and included in $\Theta$ (which is not assumed to be bounded itself) since it will be applied locally around $\theta^0$ in the \sub{icl.sel}.
 
For any bounded $\widetilde\Theta\subset\mathbb{R}^D$, $\diam \widetilde\Theta = \sup \bigl\{ \|\theta_1-\theta_2\|_\infty : \theta_1,\theta_2\in\widetilde\Theta \bigr\}$.

\begin{Le}[Bracketing Entropy, Convex Case]\label{le.icl.Snconv}
Let $r\in\mathbb{N}^*$, $D\in\mathbb{N}^*$ and $\Theta\subset\mathbb{R}^{D}$ assumed to be convex. Let $\Theta^\mathcal{O} \subset \mathbb{R}^D$ open such that $\Theta\subset\Theta^\mathcal{O}$ and $\gamma:\Theta^\mathcal{O}\times\mathbb{R}^d\longrightarrow \mathbb{R}$. $\theta\in\Theta^\mathcal{O}\longmapsto\gamma(\theta;x)$ is assumed to be $C^1$ for $f^\wp d\lambda$-almost all $x$. Assume that \HMp{\gamma,\Theta,r} holds.
Then 
\begin{equation*}
\forall \widetilde\Theta\subset\Theta, \forall \varepsilon>0,\  N_{[\,]}(\varepsilon,\{\gamma(\theta):\theta\in\widetilde\Theta\},\|\cdot\|_r) \leq \left( \frac{ \|M'\|_r\, \diam \widetilde\Theta}{\varepsilon} \right)^{D}\vee 1.
\end{equation*}
\end{Le}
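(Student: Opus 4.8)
The plan is to build an explicit bracketing of $\{\gamma(\theta):\theta\in\widetilde\Theta\}$ by discretizing the parameter set $\widetilde\Theta$ and using the uniform bound $M'$ on the differential of $\gamma$ to control the oscillation of $\theta\mapsto\gamma(\theta;x)$ between nearby parameters. First I would cover $\widetilde\Theta$ by $\|\cdot\|_\infty$-balls of radius $\rho$ centred at points $\theta_1,\dots,\theta_N$ of $\widetilde\Theta$; since $\widetilde\Theta\subset\mathbb{R}^D$ is bounded with diameter $\diam\widetilde\Theta$, one can do this with $N\leq\bigl(\diam\widetilde\Theta/\rho\bigr)^D\vee 1$ balls (a standard volumetric/grid argument in the sup-norm). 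For each centre $\theta_j$, define the bracket endpoints $l_j(x)=\gamma(\theta_j;x)-\rho\,D\,M'(x)$ and $u_j(x)=\gamma(\theta_j;x)+\rho\,D\,M'(x)$ — or, more cleanly, $l_j=\gamma(\theta_j;x)-\rho\,M'(x)\cdot(\text{appropriate constant})$, the constant coming from how $\|l\|_\infty$ was normalized for linear forms in the preamble.

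The key step is the pointwise mean value estimate. Fix $x$ outside the null set where $\theta\mapsto\gamma(\theta;x)$ fails to be $C^1$. For $\theta$ in the ball around $\theta_j$, since $\widetilde\Theta\subset\Theta$ and $\Theta$ is convex, the segment $[\theta_j,\theta]$ stays in $\Theta^\mathcal{O}$, so by the mean value inequality $|\gamma(\theta;x)-\gamma(\theta_j;x)|\leq \sup_{t\in[\theta_j,\theta]}\bigl\|(\partial\gamma/\partial\theta)_{(t;x)}\bigr\|_\infty\cdot\|\theta-\theta_j\|_\infty\leq M'(x)\,\rho$, using the definition of $\|\cdot\|_\infty$ on linear forms from the preamble (so that $|l(\theta-\theta_j)|\leq\|l\|_\infty\|\theta-\theta_j\|_\infty$). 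Hence $l_j\leq\gamma(\theta;x)\leq u_j$ with $l_j,u_j$ as above using the constant $1$; this shows every $\gamma(\theta;\cdot)$ with $\theta$ in ball $j$ lies in the bracket $[l_j,u_j]$, so these $N$ brackets cover the class.

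It remains to size the brackets: $\|u_j-l_j\|_r = 2\rho\|M'\|_r$ (finite by \HMp{\gamma,\Theta,r}), so to get $\varepsilon$-brackets I would choose $\rho=\varepsilon/(2\|M'\|_r)$. Substituting this into $N\leq(\diam\widetilde\Theta/\rho)^D\vee 1$ gives the bound $\bigl(2\|M'\|_r\diam\widetilde\Theta/\varepsilon\bigr)^D\vee 1$, which matches the stated estimate up to the factor $2$ (I suspect the paper's covering count absorbs this, or uses half-open grid cells of side $\rho$ rather than balls, giving exactly $\|M'\|_r\diam\widetilde\Theta/\varepsilon$; I would use the grid-cell version to land on the stated constant precisely, noting a cube of side $\rho$ in the sup-norm has points within $\rho$ of a corner, not $\rho/2$).

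The main obstacle, though largely bookkeeping, is making the mean value argument airtight despite $\gamma$ being only $C^1$ almost everywhere in $x$: one must fix $x$ first, work on the full-measure set where differentiability holds, and only afterwards take $L_r$ norms — the brackets $[l_j,u_j]$ are then well-defined as $L_r$ functions because $M'\in L_r$ and, for each fixed $\theta_j$, $\gamma(\theta_j;\cdot)$ is finite $f^\wp d\lambda$-a.e. (dominated by $M'$ up to a constant, via $|\gamma(\theta_j;x)|\leq|\gamma(\theta_0;x)|+\diam\widetilde\Theta\cdot M'(x)$ for any reference point, or more directly since \HMp{} with the finite-diameter hypothesis forces local $L_r$-integrability of $\gamma$ itself). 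A minor point to handle is the degenerate case $\widetilde\Theta$ a single point or $\diam\widetilde\Theta=0$, covered by the $\vee\,1$.
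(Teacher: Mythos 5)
Your proof is correct and follows essentially the same route as the paper's: a grid covering of $\widetilde\Theta$ with mesh controlled by $\varepsilon/\|M'\|_r$, the mean value inequality along segments (valid by convexity of $\Theta$) to bound the oscillation by $M'(x)\,\|\theta-\theta_j\|_\infty$, and explicit brackets $\gamma(\theta_j)\pm\rho M'$ centred at the grid points. The factor-of-$2$ you flag is resolved exactly as you suspect: the paper takes a grid of step $\varepsilon$ so that every point of $\widetilde\Theta$ is within $\varepsilon/2$ of a grid point, which yields $\varepsilon\|M'\|_r$-brackets numbering at most $\left(\diam\widetilde\Theta/\varepsilon\right)^{D}\vee 1$, and the stated bound follows by rescaling.
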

Remark that $\Theta$ does not have to be compact. 
Its proof is a calculation which relies on the mean value theorem, hence the convexity assumption. The natural parameter space of diagonal Gaussian mixture models, with equal volumes (if $d>1$) or not, for instance, is convex (see Examples \ref{ex.diag.conv} and \ref{ex.diag.convEq}, p.~\pageref{ex.diag.conv}). General mixture models have a convex natural parameter space, too, since the set of definite positive matrices is convex. However, 
there is no reason that the parameter space $\Theta$ should be convex in general. 

Lemma \ref{le.icl.Snconv} can then be generalized at the price of assuming $\Theta$ to be compact, and included in an open set $\Theta^\mathcal O$ such that \HMp{\gamma,\Theta^\mathcal O,r} holds. This is no difficulty for the mixture models we consider, under the same lower bounds constraints as before (since $\Theta^\mathcal{O}$ itself can be chosen to be included in a compact subset of the set of possible parameters). The entropy is then increased by a multiplying factor $Q$, which only depends on $\Theta$ and roughly measures its ``nonconvexity''. Since the exponential behavior of the entropy with respect to $\varepsilon$ is of concern, this does not make the result really weaker. 

\begin{Le}[Bracketing Entropy, Compact Case]\label{le.icl.Sncomp}
Let $r\in\mathbb{N}^*$, $D\in\mathbb{N}^*$ and $\Theta\subset\mathbb{R}^{D}$ assumed to be compact. Let $\Theta^\mathcal{O} \subset \mathbb{R}^D$ open such that $\Theta\subset\Theta^\mathcal{O}$ and $\gamma:\Theta^\mathcal{O}\times\mathbb{R}^d\longrightarrow \mathbb{R}$. $\theta\in\Theta^\mathcal{O}\longmapsto\gamma(\theta;x)$ is assumed to be $C^1$ for $f^\wp d\lambda$-almost all $x$. Assume that \HMp{\gamma,\Theta^{\mathcal O},r} holds.

Then 
\begin{multline*}
\exists Q\in\mathbb{N}^*, \forall \widetilde\Theta\subset\Theta, \forall \varepsilon>0,\\ N_{[\,]}(\varepsilon,\{\gamma(\theta):\theta\in\widetilde\Theta\},\|\cdot\|_r) \leq Q \left( \frac{ \|M'\|_r \, \diam \widetilde\Theta}{\varepsilon} \right)^D\vee 1.
\end{multline*}
$Q$ is a constant which depends on the geometry of $\Theta$ ($Q=1$ if $\Theta$ is convex). 
\end{Le}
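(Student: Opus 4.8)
The plan is to reduce Lemma~\ref{le.icl.Sncomp} to the convex case (Lemma~\ref{le.icl.Snconv}) by covering the compact set $\Theta$ with a controlled number of convex pieces. The key geometric observation is that a compact subset of $\mathbb{R}^D$ is always contained in a finite union of (closed) cubes of side length, say, $\diam\Theta$ — indeed, $\Theta$ can be covered by at most $Q := \lceil 2 \rceil^D = 2^D$ such cubes, or more carefully by a number $Q$ depending only on how badly $\Theta$ fails to be convex; in the worst case a crude bound like $Q = 3^D$ works by tiling a cube of side $\diam\Theta$ containing $\Theta$. Each cube $C_j$ is convex, so $\Theta_j := \Theta \cap C_j$ need not be convex, but $C_j \cap \Theta^\mathcal{O}$ — or better, we just apply the convex-case lemma to the convex set $C_j$ itself after checking $\gamma$ is defined and $C^1$ there. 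The cleanest route: choose $Q$ convex sets $C_1,\dots,C_Q$ with $\Theta \subset \bigcup_j C_j$, each $C_j \subset \Theta^\mathcal{O}$ and with $\diam C_j \leq \diam\Theta$ (shrinking if needed, using compactness of $\Theta$ and openness of $\Theta^\mathcal{O}$ to keep the cubes inside $\Theta^\mathcal{O}$; this is where one may need to pass to a finite subcover and absorb the count into $Q$).

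The main steps, in order: First, fix $\widetilde\Theta \subset \Theta$ and $\varepsilon > 0$. Second, write $\widetilde\Theta \subset \bigcup_{j=1}^Q C_j$ where the $C_j$ are convex, contained in $\Theta^\mathcal{O}$, with $\diam C_j \leq \diam\widetilde\Theta$ — actually one takes the $C_j$ to cover $\Theta$ once and for all, then intersects, noting $\widetilde\Theta \cap C_j \subset C_j$ has diameter at most $\diam\widetilde\Theta$. Third, apply Lemma~\ref{le.icl.Snconv} to each convex set $C_j$ (with the same $\gamma$, $\Theta^\mathcal{O}$, $r$; note $\|M'\|_r$ is unchanged since $M'$ is defined via the sup over all of $\Theta \subset \Theta^\mathcal{O}$, hence dominates the sup over $C_j$): this gives
\begin{equation*}
N_{[\,]}\bigl(\varepsilon,\{\gamma(\theta):\theta\in\widetilde\Theta\cap C_j\},\|\cdot\|_r\bigr) \leq \left( \frac{\|M'\|_r\,\diam\widetilde\Theta}{\varepsilon} \right)^D \vee 1.
\end{equation*}
Fourth, since a family of brackets covering $\{\gamma(\theta):\theta\in\widetilde\Theta\cap C_j\}$ for each $j$ together covers $\{\gamma(\theta):\theta\in\widetilde\Theta\}$, the bracketing numbers add, yielding the factor $Q$:
\begin{equation*}
N_{[\,]}\bigl(\varepsilon,\{\gamma(\theta):\theta\in\widetilde\Theta\},\|\cdot\|_r\bigr) \leq Q\left( \frac{\|M'\|_r\,\diam\widetilde\Theta}{\varepsilon} \right)^D \vee 1.
\end{equation*}
If $\Theta$ is convex we take $Q=1$ (the cover is $\Theta$ itself), recovering Lemma~\ref{le.icl.Snconv}.

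The main obstacle is the construction of the convex cover with $\diam C_j \leq \diam\widetilde\Theta$ \emph{and} $C_j \subset \Theta^\mathcal{O}$ simultaneously, with $Q$ depending only on the geometry of $\Theta$ (not on $\varepsilon$ or $\widetilde\Theta$). Compactness of $\Theta$ and openness of $\Theta^\mathcal{O}$ give a Lebesgue number $\delta > 0$ such that every subset of $\Theta$ of diameter $\leq \delta$ lies inside $\Theta^\mathcal{O}$; then one tiles a bounding cube of $\Theta$ by subcubes of side $\min(\delta, \diam\widetilde\Theta)/\sqrt{D}$ — but this count depends on $\diam\widetilde\Theta$, which is bad. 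The fix is to observe that one only needs $\diam(\widetilde\Theta\cap C_j)\leq\diam\widetilde\Theta$, which holds automatically since $\widetilde\Theta\cap C_j\subset\widetilde\Theta$; so the $C_j$ may be taken with $\diam C_j$ merely $\leq\delta$, a fixed quantity, and $Q$ is then the (fixed, $\varepsilon$-independent) number of such subcubes needed to cover $\Theta$ while staying inside $\Theta^\mathcal{O}$. One should double-check the edge case where $\diam\widetilde\Theta$ is large relative to $\delta$: then $\diam(\widetilde\Theta\cap C_j)\leq\diam C_j\leq\delta\leq\diam\widetilde\Theta$, so Lemma~\ref{le.icl.Snconv} applied on $C_j$ with the bound $\|M'\|_r\diam(\widetilde\Theta\cap C_j)/\varepsilon$ is still $\leq\|M'\|_r\diam\widetilde\Theta/\varepsilon$, and the stated inequality holds a fortiori. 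This bookkeeping — keeping $Q$ independent of $\varepsilon$ and $\widetilde\Theta$ — is the only delicate point; everything else is immediate from Lemma~\ref{le.icl.Snconv} and subadditivity of bracketing numbers under finite unions.
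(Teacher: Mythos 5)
Your proposal is correct and follows essentially the same route as the paper: cover the compact $\Theta$ by finitely many convex pieces contained in $\Theta^\mathcal{O}$ (the paper uses open balls and convex hulls of $O_q\cap\Theta$, you use cubes), apply Lemma~\ref{le.icl.Snconv} to each piece intersected with $\widetilde\Theta$ using that \HMp{\gamma,\Theta^\mathcal{O},r} dominates the local suprema, and sum the bracketing numbers to get the factor $Q$. Your observation that $\diam(\widetilde\Theta\cap C_j)\leq\diam\widetilde\Theta$ makes the cover independent of $\widetilde\Theta$ and $\varepsilon$ is exactly the right resolution of the only delicate point.
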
 
This lemma is proved by applying Lemma \ref{le.icl.Snconv} since $\Theta$ is still locally convex. Since it is compact, it can be covered with a finite number $Q$ of open balls, which are convex. Lemma \ref{le.icl.Snconv} then applies to the convex hull of the intersection of $\Theta$ with each one of them. The supremum of $M'$ is taken over $\Theta^\mathcal{O}$ ---~instead of $\Theta$~--- to make sure that the assumptions of Lemma \ref{le.icl.Snconv} are fulfilled over those entire balls, which may not be included in $\Theta$.

The result we need for \sub{icl.sel} is Lemma \ref{le.icl.Snconv2}, obtained from Lemma \ref{le.icl.Snconv} by a slight modification. Since it is applied locally there, the convexity assumption is no problem. A supplementary and strong assumption \HM{\gamma,\Theta,\infty} is made. This is not fulfilled in the general Gaussian mixtures framework. A sufficient condition is that the support of $f^\wp$ is bounded. This is false of course for most usual distributions we may have in mind, but this is a reasonable modeling assumption: most modeled phenomena are bounded. Another sufficient condition to guarantee this assumption is that the contrast is bounded from above. This is actually not the case of the contrast $-\Lcc$, but this can be imposed: replace $-\Lcc$ by $(-\Lcc\wedge C)$ and, provided that $C$ is large enough, this new contrast behaves like $\Lcc$. This is a supplemental difficulty in practice to choose a relevant $C$ value, though. 

\begin{Le}[Bracketing Entropy, Convex Case]\label{le.icl.Snconv2}
Let $r \geq 2$, $D\in\mathbb{N}^*$ and $\Theta\subset\mathbb{R}^{D}$ assumed to be convex. Let $\Theta^\mathcal{O} \subset \mathbb{R}^D$ open such that $\Theta\subset\Theta^\mathcal{O}$ and $\gamma:\mathbb{R}^D\times\Theta^\mathcal{O}\longrightarrow \mathbb{R}$. $\theta\in\Theta^\mathcal{O}\longmapsto\gamma(\theta;x)$ is assumed to be $C^1$ for $f^\wp d\lambda$-almost all $x$. Assume that \HM{\gamma,\Theta,\infty} and \HMp{\gamma,\Theta,2} hold.
Then 
\begin{multline*}
\forall \widetilde\Theta\subset\Theta, \forall \varepsilon>0,\\ N_{[\,]}(\varepsilon,\{\gamma(\theta):\theta\in\widetilde\Theta\},\|\cdot\|_r) \leq \left( \frac{ 2^{r-2}\, \|M\|_\infty^\frac{r-2}{2}\, \|M'\|_2 \, \diam \widetilde\Theta}{\varepsilon^\frac{r}{2}} \right)^{D}\vee 1.
\end{multline*}
\end{Le}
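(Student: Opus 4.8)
The plan is to reduce Lemma~\ref{le.icl.Snconv2} to Lemma~\ref{le.icl.Snconv} by an interpolation argument between the $L_2$-norm and the $L_\infty$-norm. The key observation is that for any measurable $g$ and any $r\geq 2$ we have the bound $\|g\|_r^r = \E{f^\wp}{|g(X)|^r} = \E{f^\wp}{|g(X)|^2 \, |g(X)|^{r-2}} \leq \|g\|_\infty^{r-2}\, \|g\|_2^2$, hence $\|g\|_r \leq \|g\|_\infty^{(r-2)/r}\, \|g\|_2^{2/r}$. So an $\eta$-bracket in $L_2$ whose endpoints are also controlled in $L_\infty$ by some bound $B$ is automatically a $(B^{(r-2)/r}\eta^{2/r})$-bracket in $L_r$. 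The strategy is therefore: first build brackets in $L_2$ using Lemma~\ref{le.icl.Snconv}, while simultaneously ensuring those brackets' endpoints stay bounded in $L_\infty$ by exploiting \HM{\gamma,\Theta,\infty}; then translate the $L_2$ mesh size into the desired $L_r$ mesh size by the interpolation inequality above.

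Concretely, I would proceed as follows. Fix $\widetilde\Theta\subset\Theta$ and $\varepsilon>0$. By Lemma~\ref{le.icl.Snconv} applied with $r=2$, for any $\eta>0$ the class $\{\gamma(\theta):\theta\in\widetilde\Theta\}$ can be covered by at most $\bigl(\|M'\|_2\,\diam\widetilde\Theta/\eta\bigr)^D \vee 1$ brackets of $L_2$-size $\eta$. Inspecting the construction in the proof of Lemma~\ref{le.icl.Snconv} (brackets of the form $[\gamma(\theta_j)-M'(\cdot)\rho,\,\gamma(\theta_j)+M'(\cdot)\rho]$ for a grid $(\theta_j)$ of mesh $\rho$), the bracket endpoints are of the form $\gamma(\theta_j;x)\pm M'(x)\rho$; these are not bounded in $L_\infty$ in general because $M'$ need not be. However, one can clip: replace each endpoint $\gamma(\theta_j)\pm M'\rho$ by $(\gamma(\theta_j)\pm M'\rho)\wedge(\|M\|_\infty)$ and $\vee(-\|M\|_\infty)$ respectively. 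Since every $\gamma(\theta)$ with $\theta\in\Theta$ satisfies $|\gamma(\theta;x)|\leq M(x)\leq\|M\|_\infty$ for $f^\wp d\lambda$-a.e.\ $x$ by \HM{\gamma,\Theta,\infty}, clipping does not remove any function $\gamma(\theta)$ from its bracket, does not increase the $L_2$-width of any bracket, and forces every endpoint into $[-\|M\|_\infty,\|M\|_\infty]$, hence makes the difference $u-l$ of each bracket bounded by $2\|M\|_\infty$ in $L_\infty$. Then $\|u-l\|_r \leq \|u-l\|_\infty^{(r-2)/r}\,\|u-l\|_2^{2/r} \leq (2\|M\|_\infty)^{(r-2)/r}\,\eta^{2/r}$. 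Choosing $\eta$ so that the right-hand side equals $\varepsilon$, i.e.\ $\eta = \varepsilon^{r/2}/\bigl(2^{(r-2)/2}\|M\|_\infty^{(r-2)/2}\bigr)$, gives the bracketing number bound $\bigl(2^{r-2}\,\|M\|_\infty^{(r-2)/2}\,\|M'\|_2\,\diam\widetilde\Theta\,/\,\varepsilon^{r/2}\bigr)^D\vee 1$, which is exactly the claimed estimate after collecting the powers of $2$ (note $(2^{(r-2)/2})^{\,?}$ bookkeeping: $\eta^{-1}$ contributes $2^{(r-2)/2}$, raised to power $D$ inside, and $(\|M'\|_2\diam\widetilde\Theta)^D$; one checks $2^{(r-2)/2}\cdot$ the interpolation constant matches $2^{r-2}$ once the exponents are tracked carefully).

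The main obstacle I anticipate is the clipping step and its compatibility with the proof of Lemma~\ref{le.icl.Snconv}: one must verify that truncating the bracket endpoints to $[-\|M\|_\infty,\|M\|_\infty]$ genuinely preserves the covering property — that every $\gamma(\theta)$, $\theta\in\widetilde\Theta$, still lies between its (clipped) lower and upper functions — which needs the a.e.\ bound $|\gamma(\theta)|\leq\|M\|_\infty$ uniformly over $\Theta$, precisely what \HM{\gamma,\Theta,\infty} supplies, and that clipping is a contraction for every $L_p$-norm so the $L_2$-width is not inflated. A secondary point of care is the exponent bookkeeping to land exactly on $2^{r-2}$ rather than some other power of $2$; this is routine but must be done honestly. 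Apart from that, the argument is a direct combination of Lemma~\ref{le.icl.Snconv} with the elementary $L_r$–$L_2$–$L_\infty$ interpolation inequality, and I do not expect further difficulties.
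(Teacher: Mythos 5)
Your argument is correct, and the engine is the same as the paper's: the interpolation $\|g\|_r\leq\|g\|_\infty^{(r-2)/r}\|g\|_2^{2/r}$, fed by \HMp{\gamma,\Theta,2} for the $L_2$ part and \HM{\gamma,\Theta,\infty} for the $L_\infty$ part. The packaging differs, though. The paper does not reuse Lemma~\ref{le.icl.Snconv} as a black box and does not clip: it applies the interpolation \emph{pointwise} to the increment, writing $|\gamma(\theta_1;x)-\gamma(\theta_2;x)|^r\leq M'(x)^2\|\theta_1-\theta_2\|_\infty^2\,(2\|M\|_\infty)^{r-2}$ a.e., and then builds brackets directly on the grid with envelope proportional to $M'(x)^{2/r}$, whose $L_r$-norm is $\|M'\|_2^{2/r}$; the boundedness of the endpoints is thus never an issue and no truncation is needed. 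Your clipping step is a legitimate substitute --- the points you flag (that $|\gamma(\theta)|\leq\|M\|_\infty$ a.e.\ uniformly over $\Theta$ keeps every $\gamma(\theta)$ inside its clipped bracket, and that clipping does not increase the $L_2$-width of any nonempty bracket, empty brackets being discarded) are exactly what needs checking, and they do check out. One small correction to your bookkeeping: with $\eta=\varepsilon^{r/2}/(2\|M\|_\infty)^{(r-2)/2}$ your count is $\bigl(2^{(r-2)/2}\|M\|_\infty^{(r-2)/2}\|M'\|_2\,\diam\widetilde\Theta/\varepsilon^{r/2}\bigr)^D\vee 1$, so the power of $2$ comes out as $2^{(r-2)/2}$, not $2^{r-2}$; since $2^{(r-2)/2}\leq 2^{r-2}$ for $r\geq 2$, you have in fact proved a slightly sharper bound than the one stated, which of course implies the lemma. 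The trade-off between the two routes is minor: yours is more modular (any $L_2$-bracketing bound with bounded classes upgrades to $L_r$ the same way), while the paper's keeps the envelope function explicit, which is the form reused in the proof of Lemma~\ref{le.icl.Sn}.
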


Let us remark that those results are quite general. We are interested here in their application to the conditional classification likelihood, but they hold all the same in the standard likelihood framework. \cite{MauMic09} already provide bracketing entropy results in this framework. Our results cannot be directly compared to theirs since they consider the Hellinger distance. The dependency they get on the parameter space bounds and the variable space dimension $d$ is explicit. This is helpful to derive an oracle inequality. But they could not derive a local control of the entropy, hence an unpleasant logarithm term in the expression of the optimal penalty they get. Their results also suggest the necessity of assuming the contrast to be bounded: see the discussion after the Theorem \ref{th.icl.selgmm}. The results we propose achieve the same rate with respect to $\varepsilon$. They depend on more opaque quantities ($\|M\|_\infty$ and $\|M'\|_2$). This notably implies, from this first step already, the assumption that the contrast is bounded ---~over the true distribution support. However, it could be expected to control those quantities with respect to the parameter space bounds. Moreover, beside their simplicity, they straightforwardly enable to derive a local control of the entropy.

\section{Model Selection}\label{icl.sel}

As illustrated by Example \ref{ex.icl.ex1}, model selection is a crucial step. The number of classes may even be the target of the study. Anyhow, a relevant number of classes must obviously be chosen so as to design a good classification. 

Model selection procedures introduced here are penalized conditional classification likelihood criteria:
\begin{equation*}
\crit(K) = -\Lcc(\MLccE\theta_K) + \pen(K).
\end{equation*}
Most results are stated for a general contrast $\gamma$ and any family of models $\{\mathcal{M}_K\}_{1\leq K \leq K_M}$ and then applied to $-\Lcc$ and the Gaussian mixtures family of models $\{\mathcal{M}_K\}_{1\leq K \leq K_M}$ introduced in \sect{icl.int.gmm}. 

In \sub{icl.sel.cri}, the consistency of such a model selection procedure (``identification'' point of view) is proved for a class of penalties. Sufficient conditions are given in the general Theorem \ref{th.icl.consel}, which is applied to the framework we are interested in in Theorem \ref{th.icl.selgmm}. The heaviest condition of Theorem \ref{th.icl.consel} \eqref{icl.b4} may be guaranteed under regularity and (weak) identifiability assumptions, and is discussed in \sub{icl.sel.suf}. Our approach is inspired from works of \citet{Mas07} and is the first step to reach non-asymptotic results. 

\subsection{Consistent Penalized Criteria}\label{icl.sel.cri}
Assume that $K_0$ exists such that
\begin{equation*}
\text{\raisebox{.5em}{and}} 
\begin{aligned}
\quad\forall K<K_0, \inf_{\theta\in\Theta_{K_0}}\E{f^\wp}{\gamma(\theta)}<\inf_{\theta\in\Theta_K}\E{f^\wp}{\gamma(\theta)}\\
\quad\forall K\geq K_0,\inf_{\theta\in\Theta_{K_0}}\E{f^\wp}{\gamma(\theta)}\leq\inf_{\theta\in\Theta_K}\E{f^\wp}{\gamma(\theta)}
\end{aligned}
\end{equation*}
which means that the bias of the models is stationary from the model $\mathcal{M}_{K_0}$: it is the ``best'' model. Remark that the last property should hold mostly in the mixtures framework, and notably if the models were not constrained, and then were nested. Under this assumption, a model selection procedure is expected to asymptotically recover $K_0$, \ie to be \emph{consistent}. This is an \emph{identification} aim \citep[see][Chapter 1]{McqTsa98}. It would be disastrous to select a model which does not (almost) minimize the bias. And it is besides assumed that the model $\mathcal{M}_{K_0}$ contains all the interesting information (typically, the structure of the classes).

Let us stress that the ``true'' number of components of $f^\wp$ is not directly of concern: it is in particular not assumed that it equals $K_0$, and is not even assumed to be defined ($f^\wp$ does not have to be a Gaussian mixture). $K_0$ is the best choice from the particular point of view introduced by using the \Lcc contrast, which is not density estimation, neither is it identification of the ``true'' number of components.

\begin{The}\label{th.icl.consel}
$\{\Theta_K\}_{1\leq K \leq K_M}$ a collection of models with $\Theta_K\subset\mathbb{R}^{D_K}$ ($D_1\leq\dots\leq D_{K_M}$) and let $\theta_K^0\in\Theta_K^0$, with $\Theta_K^0=\underset{\theta\in\Theta_K}{\argmin}\ \E{f^\wp}{\gamma(\theta)}$. Assume
\begin{equation}\tag{B1}\label{icl.b1}
K_0=\min\argmin_{1\leq K\leq K_M} \E{f^\wp}{\gamma(\Theta_K^0)}
\end{equation}
\begin{equation}\tag{B2}\label{icl.b2}
\begin{split}
\forall K,\ \hat\theta_K \in \Theta_K&\text{ defined such that }\gamma_n(\hat\theta_K) \leq \gamma_n(\theta_K^0) + \pop(1)\\
&\text{ fulfills }\gamma_n(\hat\theta_K) \stackrel{\mathbb{P}}{\longrightarrow} \E{f^\wp}{\gamma(\theta_K^0)}
\end{split}
\end{equation}
\begin{equation}\tag{B3}\label{icl.b3}
\forall K,
\begin{cases} 
\pen(K)>0 \text{ and } \pen(K)=\pop(1)&\text{when }n \rightarrow +\infty\\
n\bigl(\pen(K)-\pen(K')\bigr) \xrightarrow[n \rightarrow +\infty]{\mathbb{P}} \infty&\text{when }K>K' 
\end{cases}
\end{equation}
\begin{equation}\tag{B4}\label{icl.b4}
n\bigl(\gamma_n(\hat\theta_{K_0})-\gamma_n(\hat\theta_K)\bigr)=\gop(1)\text{ for any }K\in\argmin_{1\leq K\leq K_M} \E{f^\wp}{\gamma(\Theta_K^0)}.
\end{equation}
Define $\hat K$ such that $\hat K = \min \underset{1 \leq K \leq K_M}{\argmin}\  \biggl\{\underbrace{\gamma_n(\hat\theta_K) + \pen(K)}_{\crit(K)}\biggr\}.$\\
Then $\mathbb{P} [\hat K \neq K_0] \xrightarrow[n\longrightarrow \infty]{}0.$
\end{The}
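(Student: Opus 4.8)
\textbf{Proof plan for Theorem~\ref{th.icl.consel}.}
The plan is to split the event $\{\hat K\neq K_0\}$ into underestimation $\{\hat K<K_0\}$ and overestimation $\{\hat K>K_0\}$, and to show each probability tends to $0$. For underestimation, fix any $K<K_0$. On this event we would have $\crit(K)\leq\crit(K_0)$, i.e.\ $\gamma_n(\hat\theta_K)-\gamma_n(\hat\theta_{K_0})\leq\pen(K_0)-\pen(K)$. By \eqref{icl.b3} the right-hand side is $\pop(1)$, while by \eqref{icl.b2} the left-hand side converges in probability to $\E{f^\wp}{\gamma(\Theta_K^0)}-\E{f^\wp}{\gamma(\Theta_{K_0}^0)}$, which by \eqref{icl.b1} (and the strict inequality defining $K_0$ as the \emph{minimal} minimizer, together with the fact that for $K<K_0$ the model $\mathcal M_K$ is strictly worse) is a strictly positive constant. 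Hence the inequality fails with probability tending to $1$, so $\mathbb{P}[\hat K=K]\to0$; summing over the finitely many $K<K_0$ disposes of underestimation.

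For overestimation, fix $K>K_0$. On $\{\hat K=K\}$ we have $\crit(K)\leq\crit(K_0)$, that is
\begin{equation*}
n\bigl(\gamma_n(\hat\theta_{K_0})-\gamma_n(\hat\theta_K)\bigr)\geq n\bigl(\pen(K)-\pen(K_0)\bigr).
\end{equation*}
Here two sub-cases arise. If $\E{f^\wp}{\gamma(\Theta_K^0)}>\E{f^\wp}{\gamma(\Theta_{K_0}^0)}$ (the model $\mathcal M_K$ is strictly worse), then arguing exactly as in the underestimation case the left-hand side divided by $n$ converges to a strictly negative constant, so the left-hand side tends to $-\infty$, while by \eqref{icl.b3} the right-hand side tends to $+\infty$; the inequality is thus asymptotically impossible. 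If instead $K\in\argmin_{1\leq K'\leq K_M}\E{f^\wp}{\gamma(\Theta_{K'}^0)}$, then by the key assumption \eqref{icl.b4} the left-hand side is $\gop(1)$, i.e.\ bounded in probability, whereas by \eqref{icl.b3} the right-hand side tends to $+\infty$ in probability; again the inequality holds with probability tending to $0$. In either sub-case $\mathbb{P}[\hat K=K]\to0$, and summing over the finitely many $K>K_0$ finishes the proof.

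The only non-routine ingredient is the dichotomy in the overestimation step: once the bias is stationary, a crude law-of-large-numbers comparison of $\gamma_n(\hat\theta_{K_0})$ and $\gamma_n(\hat\theta_K)$ only yields that their difference is $\pop(1)$, which is \emph{not} enough to beat a penalty difference of order $\pen(K)-\pen(K_0)=\pop(1)$ after multiplication by $n$. This is precisely why \eqref{icl.b4} is imposed: it upgrades $\pop(1)$ to $\gop(1/n)$ for the empirical-contrast gap between the best model and any equally-good larger model, so that $n$ times that gap stays bounded in probability while $n(\pen(K)-\pen(K_0))\to\infty$. I expect verifying \eqref{icl.b4} in concrete situations (done later under regularity and weak identifiability in \sub{icl.sel.suf}) to be the genuinely hard part; within the proof of this theorem itself it is simply invoked. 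A minor point to handle carefully is the use of ``$\min$'' in the definitions of $K_0$ and $\hat K$: ties in the criterion are broken toward the smaller index, which is what makes the strict inequality available for $K<K_0$ and is harmless for $K>K_0$.
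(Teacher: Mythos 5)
Your proof is correct and follows essentially the same route as the paper's: a law-of-large-numbers comparison via \eqref{icl.b2} and \eqref{icl.b3} rules out any $K$ with strictly larger limiting contrast, and \eqref{icl.b4} combined with $n\bigl(\pen(K)-\pen(K_0)\bigr)\to\infty$ rules out the equally-good larger models. The only (immaterial) difference is organizational: the paper partitions the competitors into $K\notin\mathcal{K}$ versus $K\in\mathcal{K}\setminus\{K_0\}$ rather than into $K<K_0$ versus $K>K_0$ with a sub-dichotomy, but the two case analyses coincide.
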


\begin{SkeProo}
First prove that $\hat K$ cannot asymptotically ``underestimate'' $K_0$. Suppose $\E{f^\wp}{\gamma(\theta_K^0)}>\E{f^\wp}{\gamma(\theta_{K_0}^0)}$. From \eqref{icl.b2}, $\bigl(\gamma_n(\hat\theta_K)-\gamma_n(\hat\theta_{K_0})\bigr)$ is asymptotically of order $\E{f^\wp}{\gamma(\theta_K^0)}-\E{f^\wp}{\gamma(\theta_{K_0}^0)}>0$. Since the penalty is $\pop(1)$ from \eqref{icl.b3}, $\crit(K_0)<\crit(K)$ asymptotically and $\hat K > K$.

That $\hat K$ does not asymptotically ``overestimate'' $K_0$, involves the heaviest assumption \eqref{icl.b4}. It is more involved since then $\bigl(\E{f^\wp}{\gamma(\theta_K^0)}-\E{f^\wp}{\gamma(\theta_{K_0}^0)}\bigr)$ is zero. The fluctuations of $\bigl(\gamma_n(\hat\theta_K)-\gamma_n(\hat\theta_{K_0})\bigr)$ around zero then have to be evaluated and canceled by a penalty large enough. According to \eqref{icl.b4}, a penalty larger than $\inv{n}$ should suffice. \eqref{icl.b3} guarantees this condition.
\end{SkeProo}

Assumption \eqref{icl.b3} defines the range of possible penalties; Assumption \eqref{icl.b2} is guaranteed under assumption \eqref{icl.a3} of Theorem \ref{th.icl.conest}:
\begin{Le}\label{le.icl.1}

For a fixed $K$, assume \eqref{icl.a3}. 
Then \eqref{icl.b2} holds.
\end{Le}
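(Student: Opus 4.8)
The plan is to sandwich $\gamma_n(\hat\theta_K)$ between $\E{f^\wp}{\gamma(\theta_K^0)}-\pop(1)$ and $\E{f^\wp}{\gamma(\theta_K^0)}+\pop(1)$, exploiting the near-optimality of $\hat\theta_K$ for the upper bound and the uniform law of large numbers \eqref{icl.a3}, together with the minimality of $\theta_K^0$, for the lower bound. Throughout, $\Theta=\Theta_K$ plays the role of the parameter space in \eqref{icl.a3}, and recall that $\E{f^\wp}{\gamma(\theta_K^0)}=\min_{\theta\in\Theta_K}\E{f^\wp}{\gamma(\theta)}$ since $\theta_K^0\in\Theta_K^0$.

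For the upper bound, I would start from the defining inequality $\gamma_n(\hat\theta_K)\leq\gamma_n(\theta_K^0)+\pop(1)$. Since \eqref{icl.a3} controls $\sup_{\theta\in\Theta_K}|\gamma_n(\theta)-\E{f^\wp}{\gamma(\theta)}|$, it in particular yields the pointwise convergence $\gamma_n(\theta_K^0)\stackrel{\mathbb{P}}{\longrightarrow}\E{f^\wp}{\gamma(\theta_K^0)}$ at the fixed (nonrandom) point $\theta_K^0$. Hence $\gamma_n(\hat\theta_K)\leq\E{f^\wp}{\gamma(\theta_K^0)}+\pop(1)$.

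For the lower bound, the point is that $\hat\theta_K$ is sample-dependent, so one cannot simply apply a pointwise law of large numbers at it; this is precisely where \eqref{icl.a3} is genuinely needed. I would write $\gamma_n(\hat\theta_K)=\E{f^\wp}{\gamma(\hat\theta_K)}+R_n$ with $|R_n|\leq\sup_{\theta\in\Theta_K}|\gamma_n(\theta)-\E{f^\wp}{\gamma(\theta)}|=\pop(1)$ by \eqref{icl.a3}. Since $\hat\theta_K\in\Theta_K$ and $\theta_K^0$ minimizes $\E{f^\wp}{\gamma(\cdot)}$ over $\Theta_K$, we have $\E{f^\wp}{\gamma(\hat\theta_K)}\geq\E{f^\wp}{\gamma(\theta_K^0)}$, so $\gamma_n(\hat\theta_K)\geq\E{f^\wp}{\gamma(\theta_K^0)}-\pop(1)$. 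Combining the two bounds gives $\gamma_n(\hat\theta_K)\stackrel{\mathbb{P}}{\longrightarrow}\E{f^\wp}{\gamma(\theta_K^0)}$, which is exactly the conclusion asserted in \eqref{icl.b2}.

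There is essentially no obstacle: the argument is a two-line sandwich. The only subtlety worth flagging — and the sole reason the uniform convergence \eqref{icl.a3} is invoked rather than an ordinary pointwise law of large numbers — is that the deviation $\gamma_n(\hat\theta_K)-\E{f^\wp}{\gamma(\hat\theta_K)}$ at the random point $\hat\theta_K$ must be dominated uniformly over $\Theta_K$.
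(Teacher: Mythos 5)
Your argument is correct and is essentially identical to the paper's own proof: the paper establishes the same two-sided sandwich, using \eqref{icl.a3} uniformly to control $\gamma_n(\hat\theta_K)-\E{f^\wp}{\gamma(\hat\theta_K)}$ at the random point together with the minimality of $\theta_K^0$ for the lower bound, and the near-optimality of $\hat\theta_K$ plus the law of large numbers at $\theta_K^0$ for the upper bound. Nothing further is needed.
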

Indeed, asymptoticaly, minimizing $\theta\mapsto\gamma_n(\theta)$ cannot differ much from minimizing $\theta\mapsto\E{f^\wp}{\gamma(\theta)}$ if they are uniformly close to each other \eqref{icl.a3}. 

Assumption \eqref{icl.b4} is the heaviest assumption. \sub{icl.sel.suf} is devoted to deriving sufficient conditions so that it holds. This will justify the 

\begin{The}\label{th.icl.selgmm}
$(\mathcal{M}_K)_{1\leq K \leq K_M}$ Gaussian mixture models with compact parameter space $\Theta_K$ and  $\Theta_K^0 = \argmin_{\theta\in\Theta_K} \E{f^\wp}{-\Lcc(\theta)}$ for any $K$. Let $K_0 = \min\underset{1\leq K \leq K_M}{\argmin}\ \E{f^\wp}{-\Lcc(\Theta_{K}^0)}$. Assume $\forall K, \forall \theta\in\Theta_K, \forall \theta_{K_0}^0\in\Theta_{K_0}^0$,
\begin{multline}\tag{C1}\label{eq.identifiability}
\E{f^\wp}{-\Lcc(\theta)}=\E{f^\wp}{-\Lcc(\Theta_{K_0}^0)} \\ \Longleftrightarrow -\Lcc(\theta;x)=-\Lcc(\theta_{K_0}^0;x)\quad f^\wp d\lambda-\text{a.e.}
\end{multline}
For any $K$, let $\Theta_K^\mathcal{O} \subset \mathbb{R}^{D_K}$ open over which $\Lcc$ is defined, such that $\Theta_K\subset\Theta_K^\mathcal{O}$; assume that \HM{\Lcc,\Theta_K^{\mathcal O},\infty} and \HMp{\Lcc,\Theta_K^\mathcal{O},2} hold and that $\forall\theta_K^0\in\Theta_K^0,\ I_{\theta_K^0} = \frac{\partial^2}{\partial \theta^2}\bigl( \E{f^\wp}{-\Lcc(\theta)} \bigr)_{|\theta_K^0}$ is nonsingular; let $\MLccE\theta_K\in\Theta_K$ with
\begin{equation*}
-\Lcc(\MLccE\theta_K)\leq-\Lcc(\theta_K^0) + \pop(n).
\end{equation*}
Let $\pen:\{1,\dots,K_M\}\longrightarrow\mathbb{R}^+$ (which may depend on $n$, $\bigl(\Theta_K\bigl)_{1\leq K\leq K_M}$ and the data) such that 
\begin{equation*}
\forall K\in\{1,\dots,K_M\},
\begin{cases} 
\pen(K)>0 \text{ and } \pen(K)=\pop(n)&\text{when }n \rightarrow +\infty\\
\bigl(\pen(K)-\pen(K')\bigr) \xrightarrow[n \rightarrow +\infty]{\mathbb{P}} \infty&\text{for any }K'<K.
\end{cases}
\end{equation*}

Select $\hat K$ such that $\hat K = \min \underset{1\leq K \leq K_M}{\argmin} \bigl\{-\Lcc(\MLccE\theta_K) + \pen(K)\bigr\}.$\\
Then $\mathbb{P}[\hat K \neq K_0] \xrightarrow[n\longrightarrow\infty]{} 0$.
\end{The}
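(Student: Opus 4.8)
The plan is to deduce Theorem~\ref{th.icl.selgmm} as an application of the general Theorem~\ref{th.icl.consel} with $\gamma = -\Lcc$ and the Gaussian mixtures collection $(\mathcal{M}_K)_{1\leq K\leq K_M}$, so the work consists of checking that the four hypotheses \eqref{icl.b1}--\eqref{icl.b4} hold under the present assumptions. First, \eqref{icl.b1} is immediate: it is exactly the definition of $K_0$ given in the statement, with the $\min$ over the argmin set handling the stationarity/ties. Next, \eqref{icl.b3} translates verbatim from the penalty hypotheses of the theorem once one multiplies through by $n$: here $\pen(K)=\pop(n)$ corresponds to $\pen(K)/n = \pop(1)$ and $\pen(K)-\pen(K')\to\infty$ corresponds to $n\bigl(\pen(K)/n - \pen(K')/n\bigr)\to\infty$, so applying Theorem~\ref{th.icl.consel} to the rescaled penalty $\pen/n$ (and noting $-\Lcc(\MLccE\theta_K) = n\,\gamma_n(\MLccE\theta_K)$) aligns the two statements. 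So the substance is in \eqref{icl.b2} and \eqref{icl.b4}.

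For \eqref{icl.b2}, the route is: Lemma~\ref{le.icl.Sncomp} applied with $r=1$, $\gamma=-\Lcc$, the compact $\Theta_K$ and the open superset $\Theta_K^\mathcal{O}$ (using \HMp{\Lcc,\Theta_K^\mathcal O,2}, which implies \HMp{\Lcc,\Theta_K^\mathcal O,1} since $f^\wp\cdot\lambda$ is a probability measure and $\|M'\|_1\leq\|M'\|_2$) gives finite bracketing entropy of $\{-\Lcc(\theta):\theta\in\Theta_K\}$ for every $\varepsilon>0$; by the Glivenko--Cantelli theorem recalled in \sub{icl.con.bra} this yields \eqref{icl.a3} for the $K$-th model; then Lemma~\ref{le.icl.1} gives \eqref{icl.b2}. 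This is the same chain already used to prove Theorem~\ref{th.icl.conestgmm}, so it can be invoked rather than redone.

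The main obstacle is \eqref{icl.b4}: for every $K$ in the argmin set one must show $n\bigl(\gamma_n(\MLccE\theta_{K_0})-\gamma_n(\MLccE\theta_K)\bigr)=\gop(1)$, i.e.\ the empirical contrast differences at the estimators fluctuate at scale $1/n$ rather than $1/\sqrt n$. This is exactly what the hypotheses \HM{\Lcc,\Theta_K^\mathcal O,\infty}, \HMp{\Lcc,\Theta_K^\mathcal O,2} and nonsingularity of the Hessians $I_{\theta_K^0}$ are designed to control, and \sub{icl.sel.suf} is announced as devoted to it. The expected argument: since $\E{f^\wp}{-\Lcc(\theta)}$ has the same minimal value over $\Theta_K$ and over $\Theta_{K_0}$, and $\MLccE\theta_K$, $\MLccE\theta_{K_0}$ are consistent toward the respective argmin sets (Theorem~\ref{th.icl.conestgmm}), one localizes around $\theta_K^0$ and $\theta_{K_0}^0$, where the convexity case Lemma~\ref{le.icl.Snconv2} (which needs \HM{\gamma,\Theta,\infty}, hence the boundedness-of-contrast discussion) gives a sharp local bracketing-entropy bound with the right power of $\varepsilon$; a local modulus-of-continuity / maximal-inequality argument together with a second-order Taylor expansion of $\theta\mapsto\E{f^\wp}{-\Lcc(\theta)}$ at $\theta_K^0$ (using that its gradient vanishes and its Hessian $I_{\theta_K^0}$ is nonsingular) shows $-\Lcc(\MLccE\theta_K)+\Lcc(\theta_K^0)$ and likewise for $K_0$ are each $\gop(1)$, and the population minima cancel, leaving the claimed $\gop(1)$. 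The identifiability-type assumption \eqref{eq.identifiability} enters to ensure that the set $\{-\Lcc(\theta)=-\Lcc(\theta_{K_0}^0)\ f^\wp d\lambda\text{-a.e.}\}$ is genuinely what the estimator concentrates on, so that the Hessian expansion is taken at the correct point and the $\gop(1)$ control transfers across models. Once \eqref{icl.b1}--\eqref{icl.b4} are in hand, Theorem~\ref{th.icl.consel} delivers $\mathbb{P}[\hat K\neq K_0]\to 0$ directly.
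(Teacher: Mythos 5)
Your proposal is correct and follows essentially the same route as the paper: it reduces the theorem to Theorem~\ref{th.icl.consel} by checking \eqref{icl.b1}--\eqref{icl.b4}, obtaining \eqref{icl.b2} from the bracketing-entropy/Glivenko--Cantelli chain behind Theorem~\ref{th.icl.conestgmm} together with Lemma~\ref{le.icl.1}, handling \eqref{icl.b3} by the $1/n$ rescaling of the penalty, and obtaining \eqref{icl.b4} from the localization--peeling--Taylor argument that the paper packages as Lemma~\ref{le.icl.Sn} and Corollaries~\ref{cor.icl.sqrtcon} and~\ref{cor.icl.nSn}, with the identifiability assumption \eqref{eq.identifiability} used exactly as you describe to transfer the $\gop(1)$ control across models. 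This matches the paper's stated proof.
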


If $\Theta_K$ is convex, $M$ and $M'$ can be defined as suprema over $\Theta_K$ instead of $\Theta_K^\mathcal{O}$ and there is no need to introduce the sets $\Theta_K^\mathcal{O}$. 
The new ``identifiability'' assumption \eqref{eq.identifiability} introduced is reasonable: as expected the label switching phenomenon is no problem here. But it is necessary for the identification point of view to make sense, that a single value of the \emph{contrast function} $x\longmapsto\gamma(\theta;x)$ minimizes the loss. Remark that in the standard likelihood framework, this holds at least if any model contains the sample distribution, since it is the unique Kullback-Leibler divergence minimizer. Obviously, several parameter values, perhaps in different models, may represent it, besides the label switching. We do not know any such result with the $-\Lcc$ contrast and hypothesize that the assumption holds. 

The assumption about the nonsingularity of $I_{\theta^0}$ is unpleasant, since it is hard to be guaranteed. Hopefully, it could be weakened. The result of \cite{Mas07} (Theorem 7.11) which inspires this, and is available in a standard likelihood context, does not require such an assumption since it does not rely on the study of this link between the contrast and the parameters but on a clever choice of the involved distances (Hellinger distances), and on particular properties of the $\log$ function. However, this is a usual assumption \citep[see][or below]{RedWal84}.

\label{icl.sel.bdcon} \cite{Mas07} moreover does not require the contrast (\ie the likelihood) to be bounded, as we have to. Remark however that the application of his Lemma 7.23 to obtain a genuine oracle inequality involves an assumption similar to the boundedness of the contrast. So that it seems reasonable that the assumptions about $M$ and $M'$ (the last is much milder than the former) be necessary. They are typically ensured if either the contrast is bounded or if the support of $f^\wp$ is bounded. 

The conditions about the penalty form are analogous to that of \cite{Nis88} or \cite{Ker00}, which are both derived in the standard maximum likelihood framework. As those of \cite{Ker00}, they can be regarded as generalizing those of \cite{Nis88} when the considered models are Gaussian mixture models. Indeed, \cite{Nis88} considers penalties of the form $c_n D_K$ and proves the model selection procedure to be weakly consistent if $\frac{c_n} n \rightarrow 0$ and $c_n\rightarrow \infty$. Note that \cite{Nis88} assumes the parameter space to be convex. He moreover notably assumes that $\Theta_K^0=\{\theta_K^0\}$ and that the counterpart of $I_{\theta_K^0}$ is nonsingular, together with other regularity assumptions. Those results are not particularly designed for mixture models. Instead, as we do, \cite{Ker00} considers general penalty forms and proves the procedure to be consistent if $\frac{\pen(K)} n \rightarrow 0$, $\pen(K)\rightarrow \infty$ and $\liminf \frac{\pen(K)}{\pen(K')} > 1$ if $K>K'$. These conditions are equivalent to Nishii's if $\pen(K)=c_n D_K$. In a general mixture model framework, she assumes the model family to be well-specified, the same notion of identifiability as we do, and a condition which does not seem to be directly comparable to ours  about $I_{\theta_K^0}$ but which tastes roughly the same. It might be milder. Those assumptions are proved to hold with the standard likelihood contrast for Gaussian mixture models with lower bounded, spherical covariance matrices which are the same for all components, and if the means belong to a compact. Our conditions about the penalty are a little weaker than Keribin's, but they still are quite analogous. Moreover, as compared to those results, we notably have to keep the proportions away from zero. This is necessary because the entropy term must be handled. It does not seem easy to extend the methods used by \cite{Ker00} to our framework. 

The strong version of Theorem \ref{th.icl.selgmm}, which would state the almost sure consistency of $\hat K$ to $K_0$, would then probably involve penalties a little heavier, as \cite{Nis88} and \cite{Ker00} proved in their respective frameworks: both had to assume that $\frac{\pen(K)}{\log\log n}\rightarrow\infty$.

Theorem \ref{th.icl.selgmm} is a direct consequence of Theorem \ref{th.icl.consel}, Lemma \ref{le.icl.1}, Theorem \ref{th.icl.conestgmm}, which can be applied under those assumptions, and of Corollary \ref{cor.icl.nSn} below and the discussion about its assumptions along the lines of \sub{icl.sel.suf}.

\subsection{Sufficient Conditions to Ensure Assumption \eqref{icl.b4}}\label{icl.sel.suf}
Let us introduce the notation $S_n \gamma(\theta) = n\bigl(\gamma_n(\theta) - \E{f^\wp}{\gamma(\theta)}\bigr)$. The main result of this section is Lemma \ref{le.icl.Sn}. Some intermediate results which enable to link Lemma \ref{le.icl.Sn} to Theorem \ref{th.icl.consel} via Assumption \eqref{icl.b4} are stated as corollaries and proved subsequently. Lemma \ref{le.icl.Sn} povides a control of $\sup_{\theta\in\Theta}\frac{S_n\left(\gamma(\theta^0)-\gamma(\theta)\right)}{\|\theta^0-\theta\|_\infty^2 + \beta^2}$ (with respect to $\beta$) and then of $\frac{S_n\left(\gamma(\theta^0)-\gamma(\hat\theta)\right)}{\|\theta^0-\hat\theta\|_\infty^2 + \beta^2}$. With a good choice of $\beta$, and if $S_n\bigl(\gamma(\theta^0)-\gamma(\hat\theta)\bigr)$ can be linked to $\|\theta^0-\hat\theta\|_\infty^2$, it is proved in Corollary \ref{cor.icl.sqrtcon} that it may then be assessed that $n\|\hat\theta-\theta^0\|_\infty^2=\gop(1).$

\begin{sloppypar}
Plugging this last property back into the result of Lemma \ref{le.icl.Sn} yields (Corollary \ref{cor.icl.nSn}) $n\bigl(\gamma_n(\theta^0_K)-\gamma_n(\hat\theta_K)\bigr)=\gop(1)$ for any model $K\in\argmin_{1\leq K\leq K_M} \E{f^\wp}{\gamma(\theta^0_K)}$ and then, under mild identifiability condition, $n\bigl(\gamma_n(\theta^0_{K_0})-\gamma_n(\hat\theta_K)\bigr)=\gop(1),$ which is Assumption \eqref{icl.b4}.
\end{sloppypar}

\begin{Le}\label{le.icl.Sn}Let $D\in\mathbb{N}^*$ and $\Theta\subset\mathbb{R}^{D}$ convex. Let $\Theta^\mathcal{O}\subset\mathbb{R}^D$ open such that $\Theta\subset\Theta^\mathcal{O}$ and $\gamma:\Theta^\mathcal{O}\times\Rd\rightarrow \mathbb{R}$. $\theta\in\Theta^\mathcal{O}\mapsto\gamma(\theta;x)$ is assumed to be $C^1$ over $\Theta^\mathcal{O}$ for $f^\wp d\lambda$-almost all $x$. Let $\theta^0\in\Theta$ such that $\E{f^\wp}{\gamma(\theta^0)}=\underset{\theta\in\Theta}{\inf} \E{f^\wp}{\gamma(\theta)}$. Assume that \HM{\gamma,\Theta,\infty} and \HMp{\gamma,\Theta,2} hold.\\
Then $\exists \alpha>0/ \forall n, \forall \beta>0, \forall \eta>0$, with probability larger than $(1-\exp{-\eta})$,
\begin{multline*}
\sup_{\theta	\in\Theta} \frac{S_n(\gamma(\theta^0)-\gamma(\theta))}{\|\theta^0-\theta\|_\infty^2 + \beta^2} 
\leq \frac{\alpha}{\beta^2} \Bigl(\|M'\|_2 \beta \sqrt{nD}+ \bigl(\|M\|_\infty + \|M'\|_2 \beta\bigr)D\\
+ \|M'\|_2 \sqrt{n\eta} \beta+\|M\|_\infty \eta\Bigr)
\end{multline*}
Note that $\alpha$ is an absolute constant which notably does not depend on $\theta^0$.
\end{Le}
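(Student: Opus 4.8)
The plan is to control the empirical process $S_n(\gamma(\theta^0)-\gamma(\theta))$ uniformly over $\Theta$ by a chaining/peeling argument over shells $\{\theta : 2^{j-1}\delta < \|\theta^0-\theta\|_\infty \le 2^j\delta\}$ for a suitable base scale $\delta$, combined with a concentration inequality (Talagrand's or a Bernstein-type inequality for suprema of empirical processes, as in \citet[Chapter 7]{Mas07} or \citet[Chapter 19]{Vaa98}). The denominator $\|\theta^0-\theta\|_\infty^2+\beta^2$ is exactly the weight that makes the shell decomposition converge: on the shell of radius $\sim 2^j\delta$ the denominator is $\gtrsim (2^j\delta)^2 + \beta^2$, while the supremum of the centered process over that shell is bounded using the bracketing entropy of Lemma \ref{le.icl.Snconv2} applied to $\widetilde\Theta$ equal to that shell (so $\diam\widetilde\Theta \lesssim 2^j\delta$), which under \HM{\gamma,\Theta,\infty} and \HMp{\gamma,\Theta,2} gives $N_{[\,]}(\varepsilon,\cdot,\|\cdot\|_r) \le (c\,2^j\delta/\varepsilon^{r/2})^D$ with $c$ depending on $\|M\|_\infty$ and $\|M'\|_2$.

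The key steps, in order: First, fix a shell $C_j$ and bound $\E{}{\sup_{\theta\in C_j} S_n(\gamma(\theta^0)-\gamma(\theta))}$ by a maximal inequality in terms of the bracketing entropy integral $\int_0^{\operatorname{diam}} \sqrt{D \log(c\,2^j\delta/u^{r/2})}\,du$, which for $r=2$ is essentially $\operatorname{diam}\sqrt{D}$ and more generally is controlled by $2^j\delta\sqrt{D}$ up to the $\|M\|_\infty^{(r-2)/2}$ and $\|M'\|_2$ factors; this produces the $\|M'\|_2\beta\sqrt{nD}$-type term after dividing by the denominator and optimizing the choice of $j$ against $\beta$. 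Second, add a variance/second-moment term: the $L_2(f^\wp)$ radius of $\gamma(\theta^0)-\gamma(\theta)$ over $C_j$ is $\le \|M'\|_2 \cdot 2^j\delta$ by \HMp{\gamma,\Theta,2} and the mean value theorem (using convexity of $\Theta$), while the sup-norm of each increment is $\le 2\|M\|_\infty$ by \HM{\gamma,\Theta,\infty}; feeding these into Bernstein/Talagrand gives the deviation terms $\|M'\|_2\sqrt{n\eta}\,\beta$ and $\|M\|_\infty\eta$, and the expectation bound contributes $(\|M\|_\infty + \|M'\|_2\beta)D$ together with $\|M'\|_2\beta\sqrt{nD}$. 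Third, sum the failure probabilities $\exp(-\eta)$ geometrically over $j$ (the extra $2^{-j}$-type gain from the denominator makes the sum converge and only costs an absolute constant, absorbed into $\alpha$), and take the supremum of the per-shell bounds, which is achieved at the scale $2^j\delta \sim \beta$ — this is precisely why the final bound is homogeneous of the displayed form with the prefactor $\alpha/\beta^2$.

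The main obstacle is the entropy-integral step when $r>2$: the bracketing number grows like $\varepsilon^{-rD/2}$ rather than $\varepsilon^{-D}$, so the naive Dudley integral $\int_0 \sqrt{\log N_{[\,]}(u,\cdot,\|\cdot\|_r)}\,du$ might look divergent near zero, and one must be careful to run the chaining in the $\|\cdot\|_2$ metric (for which the entropy exponent is $D$, using $r=2$ in \HMp{}) rather than in $\|\cdot\|_r$, and then translate using the sup-norm bound $\|M\|_\infty$ to pass between $L_2$ and the stronger norm needed for concentration. Getting the constant $\alpha$ to be genuinely absolute — independent of $\theta^0$, of the model dimension $D$, and of $\|M\|_\infty,\|M'\|_2$ — requires tracking that all $\theta^0$- and $D$-dependence has been pulled out into the explicit factors in the statement; this bookkeeping, while not deep, is where one must be most careful. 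The remaining steps (mean value theorem bounds on increments, geometric summation over shells, optimizing $j$) are routine given Lemma \ref{le.icl.Snconv2} and a standard concentration inequality.
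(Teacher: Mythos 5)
Your proposal is correct and follows essentially the same route as the paper: localize around $\theta^0$, control increments by the mean value theorem under \HM{\gamma,\Theta,\infty} and \HMp{\gamma,\Theta,2}, invoke the local bracketing entropy bound of Lemma \ref{le.icl.Snconv2} in a maximal inequality, and peel to obtain the weighted global supremum. The only cosmetic difference is that you run the peeling as an explicit dyadic shell decomposition with a union bound over shells, whereas the paper packages the same steps through \citet{Mas07} (Theorem 6.8 for the local expected supremum, Lemma 4.23 for the peeling at the level of conditional expectations, and Lemma 2.4 to convert to a probability statement).
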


\begin{SkeProo}
The proof relies on results of \cite{Mas07} and on the evaluation of the bracketing entropy of the class of functions at hand. Lemma \ref{le.icl.Snconv2} provides a local control of the entropy and hence, through Theorem 6.8 in \citet{Mas07}, a control of the supremum of $S_n(\gamma(\theta^0)-\gamma(\theta))$ as $\|\theta-\theta^0\|^2_\infty<\sigma$, with respect to $\sigma$. The ``peeling'' Lemma 4.23 in \citet{Mas07} then enables to take advantage of this local control to derive a fine global control of $\sup_{\theta\in\Theta} \frac{S_n(\gamma(\theta^0)-\gamma(\theta))}{\|\theta-\theta^0\|^2 + \beta^2}$, for any $\beta$. This control in expectation, which can be derived conditionally to any event A, yields a control in probability thanks to Lemma 2.4 in \citet{Mas07}, which can be thought of as an application of Markov's inequality.
\end{SkeProo}

\begin{Cor}\label{cor.icl.sqrtcon}Same assumptions as Lemma~\ref{le.icl.Sn}, but the convexity of $\Theta$. Besides assume that $I_{\theta^0}=\dd{\E{f^\wp}{\gamma(\theta)}}{\theta}{\theta^0}$ is nonsingular. Let $(\hat \theta_n)_{n\geq 1}$ such that $\hat\theta_n\in\Theta$, $\gamma_n(\hat\theta_n)\leq\gamma_n(\theta^0) + \gop(\inv{n})$ and $\hat\theta_n \xrightarrow[n\rightarrow\infty]{\mathbb{P}} \theta^0.$
Then 
\begin{equation*}
n\|\hat\theta_n - \theta^0\|_\infty^2 = \gop(1).
\end{equation*}
The constant involved in $\gop(1)$ depends on $D$, $\|M\|_\infty$, $\|M'\|_2$ and $I_{\theta^0}$. 
\end{Cor}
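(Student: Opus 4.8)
The goal is to upgrade the weak consistency $\hat\theta_n\to\theta^0$ to the $\sqrt n$-rate bound $n\|\hat\theta_n-\theta^0\|_\infty^2=\gop(1)$, using Lemma~\ref{le.icl.Sn} together with the nonsingularity of $I_{\theta^0}$. The strategy is the classical "basic inequality plus empirical-process control" argument. First I would write down the basic inequality that comes from the (near-)minimality of $\hat\theta_n$: since $\gamma_n(\hat\theta_n)\leq\gamma_n(\theta^0)+\gop(1/n)$, multiplying by $n$ and inserting $\pm\,n\E{f^\wp}{\gamma(\cdot)}$ gives
\begin{equation*}
n\bigl(\E{f^\wp}{\gamma(\hat\theta_n)}-\E{f^\wp}{\gamma(\theta^0)}\bigr) \leq S_n\bigl(\gamma(\theta^0)-\gamma(\hat\theta_n)\bigr) + \gop(1).
\end{equation*}
Since $\theta^0$ minimizes $\theta\mapsto\E{f^\wp}{\gamma(\theta)}$ and $I_{\theta^0}$ is nonsingular (hence positive definite at a minimum, with $C^1$, indeed $C^2$ in expectation here), a second-order Taylor expansion around $\theta^0$ gives a quadratic lower bound: there is $c>0$ with $\E{f^\wp}{\gamma(\theta)}-\E{f^\wp}{\gamma(\theta^0)}\geq c\,\|\theta-\theta^0\|_\infty^2$ for $\theta$ in a neighborhood of $\theta^0$; weak consistency puts $\hat\theta_n$ in that neighborhood with probability tending to one. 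So the left side is $\geq c\,n\|\hat\theta_n-\theta^0\|_\infty^2$ on that event.

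Next I would control the right side via Lemma~\ref{le.icl.Sn}. On the good event of that lemma (probability $\geq 1-\exp{-\eta}$),
\begin{equation*}
S_n\bigl(\gamma(\theta^0)-\gamma(\hat\theta_n)\bigr) \leq \bigl(\|\hat\theta_n-\theta^0\|_\infty^2+\beta^2\bigr)\cdot\frac{\alpha}{\beta^2}\Bigl(\|M'\|_2\beta\sqrt{nD}+(\|M\|_\infty+\|M'\|_2\beta)D+\|M'\|_2\sqrt{n\eta}\,\beta+\|M\|_\infty\eta\Bigr).
\end{equation*}
Now choose $\beta = \beta_n = a/\sqrt n$ for a constant $a$ to be fixed, and fix $\eta$ at a large constant. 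Then $\beta_n^2 = a^2/n$ and the factor $\frac{\alpha}{\beta_n^2}\bigl(\dots\bigr)$ is, after substitution, of order $\frac{n}{a^2}\cdot\frac{1}{\sqrt n}\cdot O(1) = O(\sqrt n/a^2)$ from the $\sqrt{nD}\,\beta_n$ and $\sqrt{n\eta}\,\beta_n$ terms, and $O(n/a^2)\cdot O(1/n) = O(1/a^2)$ from the $D$ and $\eta$ terms — wait, more carefully: $\frac{\alpha}{\beta_n^2}\|M'\|_2\beta_n\sqrt{nD} = \alpha\|M'\|_2\sqrt{nD}/\beta_n = \alpha\|M'\|_2\sqrt{nD}\cdot\sqrt n/a = O(n)/a$. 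That is the dominant term. So combining, writing $r_n := n\|\hat\theta_n-\theta^0\|_\infty^2$, the basic inequality becomes, on the good event,
\begin{equation*}
c\,r_n \leq \Bigl(\tfrac{r_n}{n}+\tfrac{a^2}{n}\Bigr)\cdot\frac{C_1 n}{a} + C_2 + \gop(1) = \frac{C_1}{a}\,r_n + C_1 a + C_2 + \gop(1),
\end{equation*}
where $C_1,C_2$ depend on $D,\|M\|_\infty,\|M'\|_2,\eta$. Choosing $a$ large enough that $C_1/a < c/2$, this rearranges to $r_n \leq \tfrac{2}{c}(C_1 a + C_2) + \gop(1)$, i.e. $r_n=\gop(1)$, with the constant depending on $D$, $\|M\|_\infty$, $\|M'\|_2$ and (through $c$) on $I_{\theta^0}$, exactly as asserted. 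To pass from "with probability $\geq 1-\exp{-\eta}$ for each fixed large $\eta$" to the $\gop(1)$ statement, I would either note that $\eta$ can be taken as a fixed large constant to make the exceptional probability as small as desired uniformly in $n$ (which is all $\gop(1)$ requires), or invoke the in-probability version of the bound (Lemma~2.4 in \citet{Mas07}) already used inside the proof of Lemma~\ref{le.icl.Sn}.

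The main obstacle is the self-referential appearance of $r_n$ on both sides: the empirical-process bound is controlled in terms of $\|\hat\theta_n-\theta^0\|_\infty^2$, which is what we are trying to bound. This is resolved precisely by the $\beta$-localization built into Lemma~\ref{le.icl.Sn} (the denominator $\|\theta^0-\theta\|_\infty^2+\beta^2$) together with the quadratic margin from nonsingular $I_{\theta^0}$: choosing $\beta_n\asymp n^{-1/2}$ balances the fluctuation term against the margin, and choosing the tuning constant $a$ large makes the $r_n$-coefficient on the right strictly smaller than $c$ on the left, so it can be absorbed. A secondary technical point is making the quadratic lower bound on $\E{f^\wp}{\gamma(\cdot)}-\E{f^\wp}{\gamma(\theta^0)}$ rigorous: this needs $\theta\mapsto\E{f^\wp}{\gamma(\theta)}$ to be twice differentiable near $\theta^0$ with $I_{\theta^0}$ its Hessian, which follows from dominated differentiation under \HMp{\gamma,\Theta,2} (and a mild second-order analogue), the minimality of $\theta^0$ forcing $I_{\theta^0}\succeq0$, and nonsingularity then forcing $I_{\theta^0}\succ0$; one must also handle the norm equivalence between $\|\cdot\|_\infty$ and the Euclidean norm on $\mathbb{R}^D$, which only changes constants.
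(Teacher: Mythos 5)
Your proof is correct and follows essentially the same route as the paper's: localize near $\theta^0$ using weak consistency, lower-bound the excess risk quadratically via the nonsingular $I_{\theta^0}$, control $S_n\bigl(\gamma(\theta^0)-\gamma(\hat\theta_n)\bigr)$ with Lemma~\ref{le.icl.Sn} at $\beta\asymp n^{-1/2}$, and absorb the self-referential term by taking the tuning constant large. The only detail to make explicit is that, since $\Theta$ is no longer assumed convex here, Lemma~\ref{le.icl.Sn} must be applied on a convex ball $B(\theta^0,\varepsilon)\subset\Theta^{\mathcal O}$ containing $\hat\theta_n$ with high probability --- the same localization you already invoke for the quadratic margin.
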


This is a direct consequence of Lemma \ref{le.icl.Sn}: it suffices to choose $\beta$ well. The dependency of $\gop(1)$ in $D$, $\|M\|_\infty$, $\|M'\|_2$ and $I_{\theta^0}$ is not a problem since we aim at deriving an asymptotic result: the order of $\|\theta-\theta^0\|^2_\infty$ with respect to $n$  when the model is fixed is of concern. 

The assumption that $I_{\theta^0}$ is nonsingular plays an analogous role as Assumption \eqref{icl.a2} in Theorem \ref{th.icl.conest}: this ensures that $\E{f^\wp}{\gamma(\theta)}$ cannot be close to $\E{f^\wp}{\gamma(\theta^0)}$ if $\theta$ is not close to $\theta^0$. But this stronger assumption is necessary to strengthen the conclusion: the rate of the relation between $\E{f^\wp}{\gamma(\theta)}-\E{f^\wp}{\gamma(\theta^0)}$ and $\|\theta-\theta^0\|$ can then be controlled...

Should this assumption fail, $\exists \tilde\theta\in\Theta/\tilde\theta'I_{\theta^0}\tilde\theta = 0 \Rightarrow \E{f^\wp}{\gamma(\theta^0+\lambda\tilde\theta)} = \E{f^\wp}{\theta^0} + \po(\lambda^2)$ and then there is no hope to have $\alpha>0$ such that $\E{f^\wp}{\gamma(\theta)}-\E{f^\wp}{\gamma(\theta^0)}>\alpha \|\theta-\theta^0\|^2$: this approach cannot be applied without this ---~admittedly unpleasant~--- assumption. Perhaps an other approach (with distances not involving the parameters but directly the contrast values) might enable to avoid it, as \cite{Mas07} did in the likelihood framework.

\begin{Cor}\label{cor.icl.nSn} Let $(\Theta_K)_{1\leq K \leq K_M}$ be models with, for any $K$, $\Theta_K\subset\mathbb{R}^{D_K}$. Assume that $D_1\leq\dots\leq D_{K_M}$. For any $K$, assume there exists an open set $\Theta_K^\mathcal{O}\subset\mathbb{R}^{D_K}$ such that $\Theta_K\subset\Theta_K^\mathcal{O}$ and such that with $\Theta^\mathcal{O}=\Theta_1^\mathcal{O}\cup\dots\cup\Theta_{K_M}^\mathcal{O}$, $\gamma:\Theta^\mathcal{O}\times\Rd\longrightarrow \mathbb{R}$ is defined and $C^1$ for $f^\wp d\lambda$-almost all $x$. Assume that \HM{\gamma,\Theta^{\mathcal O},\infty} and \HMp{\gamma,\Theta^{\mathcal O},2} hold. Let, for any $K$, $\Theta_K^0=\argmin_{\theta\in\Theta_K} \E{f^\wp}{\gamma(\theta)}$ and $\theta_K^0\in\Theta_K^0$.\\
Let $K_0=\min\argmin_{1\leq K\leq K_M} \E{f^\wp}{\gamma(\Theta_K^0)}$ and assume $\forall K, \forall\theta\in\Theta_K,$
\begin{equation*}
\E{f^\wp}{\gamma(\theta)}=\E{f^\wp}{\gamma(\theta_{K_0}^0)} \Longleftrightarrow \gamma(\theta)=\gamma(\theta_{K_0}^0) \quad f^\wp d\lambda-\text{a.e.}
\end{equation*}
Let $\mathcal{K}=\Bigl\{K\in\{1,\dots,K_M\} : \E{f^\wp}{\gamma(\theta_K^0)}=\E{f^\wp}{\gamma(\theta_{K_0}^0)}\Bigr\}$.\\
For any $K\in\mathcal{K}$, let $\hat\theta_K\in\Theta_K$ such that 
\begin{equation*}
\gamma_n(\hat\theta_K)\leq\gamma_n(\theta_K^0) + \gop\Bigl(\inv{n}\Bigr)\qquad\text{and}\qquad\hat\theta_K \xrightarrow[n\rightarrow \infty]{\mathbb{P}}\theta_K^0.
\end{equation*}
Assume that $I_{\theta_K^0}=\dd{\phantom{\Bigl(}\hspace{-0.6em}\E{f^\wp}{\gamma(\theta)}}{\theta}{\theta_K^0}$ is nonsingular for any $K\in\mathcal{K}$.\\
Then
$
\forall K\in\mathcal{K},\ n\bigl(\gamma_n(\hat\theta_{K_0})-\gamma_n(\hat\theta_K)\bigr) = \gop(1).
$
\end{Cor}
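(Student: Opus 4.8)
The plan is to fix $K\in\mathcal{K}$ and compare the empirical contrast at $\hat\theta_K$ with the empirical contrast at $\theta_K^0$, and similarly for $K_0$, then combine. Write, using the notation $S_n\gamma(\theta)=n(\gamma_n(\theta)-\E{f^\wp}{\gamma(\theta)})$, the decomposition
\begin{multline*}
n\bigl(\gamma_n(\hat\theta_{K_0})-\gamma_n(\hat\theta_K)\bigr)
= n\bigl(\gamma_n(\hat\theta_{K_0})-\gamma_n(\theta_{K_0}^0)\bigr)
- n\bigl(\gamma_n(\hat\theta_K)-\gamma_n(\theta_K^0)\bigr)\\
+ S_n\bigl(\gamma(\theta_{K_0}^0)-\gamma(\theta_K^0)\bigr)
+ n\bigl(\E{f^\wp}{\gamma(\theta_{K_0}^0)}-\E{f^\wp}{\gamma(\theta_K^0)}\bigr).
\end{multline*}
The last term vanishes exactly because $K\in\mathcal{K}$, so $\E{f^\wp}{\gamma(\theta_K^0)}=\E{f^\wp}{\gamma(\theta_{K_0}^0)}$. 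The third term is $\gop(1)$ by the CLT: it is $\sqrt n$ times an empirical average of the fixed, integrable (indeed $\|\cdot\|_2$-bounded, by \HM{\gamma,\Theta^{\mathcal O},\infty}) centered function $\gamma(\theta_{K_0}^0;\cdot)-\gamma(\theta_K^0;\cdot)$, and moreover the identifiability hypothesis forces $\gamma(\theta_{K_0}^0)=\gamma(\theta_K^0)$ $f^\wp d\lambda$-a.e., so this term is in fact $0$. (Even without using that, $\gop(1)$ suffices.)

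It remains to control the two ``estimation gap'' terms $n(\gamma_n(\hat\theta_K)-\gamma_n(\theta_K^0))$ and the analogue for $K_0$. For each such $K$ (including $K_0$, since $K_0\in\mathcal K$), the hypotheses of Corollary \ref{cor.icl.sqrtcon} are met on a convex neighborhood of $\theta_K^0$: $\gamma$ is $C^1$ on $\Theta_K^\mathcal{O}$, \HM{\gamma,\Theta_K^\mathcal{O},\infty} and \HMp{\gamma,\Theta_K^\mathcal{O},2} hold, $I_{\theta_K^0}$ is nonsingular, $\hat\theta_K\xrightarrow{\mathbb{P}}\theta_K^0$, and $\gamma_n(\hat\theta_K)\leq\gamma_n(\theta_K^0)+\gop(1/n)$. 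Hence $n\|\hat\theta_K-\theta_K^0\|_\infty^2=\gop(1)$. Now apply Lemma \ref{le.icl.Sn} on a small convex ball $\widetilde\Theta_K$ around $\theta_K^0$ (into which $\hat\theta_K$ falls with probability tending to one), with the choice $\beta=\beta_n\sim 1/\sqrt n$: the right-hand side of Lemma \ref{le.icl.Sn} then becomes, for fixed $K$, of order $1$ in $n$, so that
\begin{equation*}
\bigl|S_n\bigl(\gamma(\theta_K^0)-\gamma(\hat\theta_K)\bigr)\bigr|
\leq \bigl(\|\hat\theta_K-\theta_K^0\|_\infty^2+\beta_n^2\bigr)\cdot\gop(1)
=\Bigl(\gop\bigl(\tfrac1n\bigr)+\gop\bigl(\tfrac1n\bigr)\Bigr)\cdot\gop(1)\cdot n = \gop(1),
\end{equation*}
where I have used $n\|\hat\theta_K-\theta_K^0\|_\infty^2=\gop(1)$. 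On the other hand, writing $n(\gamma_n(\hat\theta_K)-\gamma_n(\theta_K^0)) = S_n(\gamma(\hat\theta_K)-\gamma(\theta_K^0)) + n(\E{f^\wp}{\gamma(\hat\theta_K)}-\E{f^\wp}{\gamma(\theta_K^0)})$, the second piece is nonnegative and, by a Taylor expansion of $\theta\mapsto\E{f^\wp}{\gamma(\theta)}$ at its minimizer $\theta_K^0$ using the $C^1$/nonsingular-Hessian structure, it is $\go(n\|\hat\theta_K-\theta_K^0\|_\infty^2)=\gop(1)$; combined with the bound on $S_n(\gamma(\theta_K^0)-\gamma(\hat\theta_K))$ just obtained, $n(\gamma_n(\hat\theta_K)-\gamma_n(\theta_K^0))=\gop(1)$. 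Applying this to $K$ and to $K_0$, plugging into the decomposition above, all four terms are $\gop(1)$ and the claim follows.

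The main obstacle is the middle step: turning the mere rate $n\|\hat\theta_K-\theta_K^0\|_\infty^2=\gop(1)$ (from Corollary \ref{cor.icl.sqrtcon}) into a genuine $\gop(1)$ bound on $S_n(\gamma(\theta_K^0)-\gamma(\hat\theta_K))$ without a CLT for $\hat\theta_K$ (which is a random point). This is exactly what Lemma \ref{le.icl.Sn} is designed for — it gives a \emph{uniform} control of $S_n(\gamma(\theta^0)-\gamma(\theta))/(\|\theta^0-\theta\|_\infty^2+\beta^2)$ over all $\theta$ — so the delicate part is only the bookkeeping of constants: checking that with $\beta_n$ of order $n^{-1/2}$ the bound in Lemma \ref{le.icl.Sn} stays $\go(1)$ (the dominant terms $\|M'\|_2\beta\sqrt{nD}/\beta^2 = \|M'\|_2\sqrt{nD}/\beta$ and $\|M\|_\infty D/\beta^2$ require $\beta\gtrsim n^{-1/2}$, and no larger choice is needed because $\|\hat\theta_K-\theta_K^0\|_\infty^2$ is already $\gop(1/n)$), and that all the $\gop$/$\gop$ manipulations are legitimate since $K$ and $K_M$ are fixed and the constants depend only on the fixed quantities $D_K$, $\|M\|_\infty$, $\|M'\|_2$, $I_{\theta_K^0}$.
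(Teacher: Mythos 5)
Your proof is correct and follows essentially the same route as the paper's: Corollary \ref{cor.icl.sqrtcon} gives $n\|\hat\theta_K-\theta_K^0\|_\infty^2=\gop(1)$, Lemma \ref{le.icl.Sn} applied with $\beta$ of order $n^{-1/2}$ converts this into a $\gop(1)$ control of $S_n\bigl(\gamma(\theta_K^0)-\gamma(\hat\theta_K)\bigr)$, the sign of $\E{f^\wp}{\gamma(\hat\theta_K)}-\E{f^\wp}{\gamma(\theta_K^0)}$ together with the near-minimality of $\hat\theta_K$ yields $n\bigl(\gamma_n(\theta_K^0)-\gamma_n(\hat\theta_K)\bigr)=\gop(1)$, and the identifiability assumption identifies $\gamma_n(\theta_K^0)$ with $\gamma_n(\theta_{K_0}^0)$ (your extra Taylor expansion of the bias term is harmless but not needed, and note that the lemma only bounds $S_n\bigl(\gamma(\theta_K^0)-\gamma(\hat\theta_K)\bigr)$ from above, not in absolute value --- the missing side is supplied by the definition of $\hat\theta_K$). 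One genuine slip in an aside: your parenthetical claim that the cross term $S_n\bigl(\gamma(\theta_{K_0}^0)-\gamma(\theta_K^0)\bigr)$ would be $\gop(1)$ ``by the CLT even without'' the identifiability assumption is false, since $S_n$ of a fixed non-degenerate centered function is $n$ times (not $\sqrt n$ times) a centered empirical mean, hence $\gop(\sqrt n)$; the identifiability assumption, which makes this term exactly zero, is genuinely needed there.
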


This last corollary states conditions under which assumption \eqref{icl.b4} of Theorem \ref{th.icl.consel} is ensured. 

\subsection{A New Light on ICL}\label{icl.sel.icl}

The previous section suggests links between model selection penalized criteria with the standard likelihood on the one hand and with the conditional classification likelihood we defined on the other hand. Indeed penalties with the same form as those given by \cite{Nis88} or \cite{Ker00} with the standard likelihood are proved to be ``consistent'' in our framework. Therefore, by analogy with the standard likelihood framework, it is expected that penalties proportional to $D_K$ conform an efficiency point of view (think of AIC), and that penalties proportional to $D_K \log n$ are optimal for an identification purpose (think of BIC). This possibility to derive an identification procedure from an\supprok{optimal} efficient procedure by a $\log n$ factor is notified for example by \cite{Arl07}.

Let us then consider by analogy with BIC the penalized criterion
\begin{equation*}
\crit_{\lcc-ICL}(K) = \Lcc(\MLccE\theta_K) - \frac{\log n}{2} D_K.
\end{equation*}
The point is that we almost recover ICL (replace $\MLE\theta_K$ by $\MLccE\theta$ in \eqref{eq.icl.deficl}), which may then be regarded as an approximation of this \lcc-ICL criterion. The corresponding penalty is $\frac{\log n}{2}D_K$, and the derivation of \lcc-ICL illustrates that the entropy should not be considered as a part of the penalty. This notably justifies why ICL does not select the same number of components as BIC or any consistent criterion in the standard likelihood framework, even asymptotically. Actually, it should not be expected to do so. 

When $\MLccE\theta_K$ differs from $\MLE\theta_K$, the former provides more separated clusters. The compromise between the Gaussian component and the cluster viewpoint is achieved with $\MLccE\theta_K$ from the very estimation step. The user is provided a solution which aims at this compromise for each number of classes $K$. However, the number of classes selected through \lcc-ICL differs seldom from the one selected by ICL in simulations \citep[See][Chapter 4]{Bau09}. 

Finally, \lcc-ICL is quite close to ICL and enables to better understand the concepts underlying ICL. ICL remains attractive though, notably because it is easier to implement than \lcc-ICL.

\section{Discussion}\label{icl.dis}
Two families of criteria, in the clustering framework, are distinguished: it is shown that ICL's purpose is of different nature than that of BIC or AIC. The identification theory for the criteria based on the conditional classification likelihood is ---~not surprisingly~--- very similar to the one for the standard likelihood. A major interest of the newly introduced estimator and criteria is to better understand the ICL criterion and the underlying notion of class. This is nor a simple notion of cluster ---~as for example for the k-means procedure~--- neither a pure notion of ``component'' ---~as underlying the MLE/BIC approach~--- but a compromise between both. ICL leads to discovering classes matching a subtle combination of the notions of well separated, compact, clusters, and (Gaussian) mixture components. It then enjoys the flexibility and modeling possibilities of the model-based clustering approach, but does not break the expected notion of cluster. Better understanding of the ICL criterion now means better understanding the newly involved contrast \lcc.

The choice of the involved mixture components must be handled with care in this framework since it leads the cluster shape underlying the study. Several forms of Gaussian mixtures may be involved: for example, spherical and general models may be compared, or models with free proportions may be compared with models with equal proportions. 

Besides it should be further studied how the complexity of the models should be measured when several model kinds are compared. The dimension of the model as a parametric space works for the reported theoretical results. But we are not completely convinced that it is the finest measure of the complexity of Gaussian mixture models. As a matter of fact this simple parametric point of view amounts to considering that all parameters play an analogous role. This is not really natural. 

A further theoretical step would be to drive non-asymptotic results and oracle inequalities. This may give more precise insights about the best penalty shape to use, and then justify the use of the slope heuristics of \citet{BirMas06} (\citealp[see also][]{BauMauMic11} or \citealp{Bau09} for simulations and discussions on this topic).

A practical challenge is to provide efficient optimization algorithms. Some work has been done in this direction already: see \citet{BauCelMar08} and \citet[Section 5.1]{Bau09}. But they need be improved to be more reliable, and above all to run much faster, which would obviously be a condition for a spread practical use of the new contrast. 

A possibility to make this contrast more flexible would be to assign different weights to the log likelihood and the entropy: $\Lcc_\alpha = \alpha\log\Lo + (1-\alpha) \ent$, with $\alpha\in [0;1]$. This would enable to tune how important the assignment confidence is with respect to the Gaussian fit... The difficulty would then be to choose $\alpha$. A first insight which comes in mind is to calibrate $\alpha$ from simulations of situations in which the user knows what solution he expects. 

\section{Proofs}\label{icl.pro}
\begin{proof}[Proof (Max of $h_K$, p.~\pageref{hk})]
\begin{sloppypar}
If $h_K$ reaches a max. value at $(t_1^0,\dots,t_K^0)$ under the constraint $\sum_{k=1}^K t_k^0=1$ then, with $S:(t_1,\dots,t_K)\mapsto\sum_{k=1}^K t_k$, 
\end{sloppypar}
\begin{equation*}
\exists \lambda\in\mathbb{R}/ dh_K(t_1^0,\dots,t_K^0)=\lambda dS(t_1^0,\dots,t_K^0).
\end{equation*}
This is equivalent to $\forall k, \log t_k^0 + 1 = \lambda$. Then, $\forall k,\forall k', t_k^0=t_{k'}^0$ and since $\sum_{k=1}^K t_k^0=1$, this yields $t_k^0=\inv{K}$.
\end{proof}
\begin{proof}[Proof of Theorem \ref{th.icl.conest}]\label{pr.icl.conest}
Let $\varepsilon>0$ and $\eta = \inf_{d(\theta,\Theta^0)>\varepsilon} \E{f^\wp}{\gamma(\theta)} - \E{f^\wp}{\gamma(\theta^0)}>0$ (from assumption \eqref{icl.a2}).
For $n$ large enough and with large probability, from assumption \eqref{icl.a3} and the definition of $\hat\theta$,
\begin{eqnarray*}
\displaystyle \sup_{\theta\in\Theta}\, \bigl|\gamma_n(\theta) - \E{f^\wp}{\gamma(\theta)}\bigr| < \frac{\eta}{3}&\text{and}&
\gamma_n(\hat\theta)\leq \gamma_n(\theta^0) + \frac{\eta}{3}\cdot
\end{eqnarray*}
Then 
\begin{align*}
\mathbb E_{f^\wp}{\bigl[ \gamma(\hat\theta)\bigr]}-\E{f^\wp}{\gamma(\theta^0)}&\leq\mathbb E_{f^\wp}{\bigl[ \gamma(\hat\theta)\bigr]} - \gamma_n(\hat\theta)
+ \gamma_n(\hat\theta) - \gamma_n(\theta^0)\\
&\quad\quad\qquad\qquad\qquad + \gamma_n(\theta^0) - \E{f^\wp}{\gamma(\theta^0)} \\
&< \eta.
\end{align*}
And $d(\hat\theta,\Theta^0)<\varepsilon$ with great probability, as $n$ is large enough.
\end{proof}
\begin{proof}[Proof of Lemma \ref{le.icl.Snconv}]
\supprok{Il y avait des $\Theta_K$ et $D_K$ partout, au lieu de $\Theta$ et $D$ : je corrige. Je remplace les $d$ par des $i$ (d est la dimension des variables). }
Let $\varepsilon>0$, and $\widetilde \Theta \subset \Theta$, with $\widetilde\Theta$ bounded. Let $\widetilde\Theta_\varepsilon$ be a grid in $\Theta$ which ``$\varepsilon$-covers'' $\widetilde\Theta$ in any dimension with step $\varepsilon$. $\widetilde\Theta_\varepsilon$ is for example $\widetilde\Theta_\varepsilon^1 \times \cdots \times \widetilde\Theta_\varepsilon^{D}$ with
\begin{equation*}
\forall i\in\{1,\dots,D\},\widetilde\Theta_\varepsilon^i=\Bigl\{\widetilde\theta_{\min}^i,\widetilde\theta_{\min}^i+\varepsilon,\dots,\widetilde\theta_{\max}^i\Bigr\},
\end{equation*}
where
\begin{equation*}
\forall i\in\{1,\dots,D\},\Bigl\{ \theta^i : \theta\in\widetilde\Theta \Bigr\} \subset \left[\widetilde\theta_{\min}^i-\frac{\varepsilon}{2},\widetilde\theta_{\max}^i+\frac{\varepsilon}{2}\right].
\end{equation*}
This is always possible since $\Theta$ is convex.
For the sake of simplicity, it is assumed without loss of generality, that $\widetilde\Theta_\varepsilon\subset\widetilde\Theta$.
With the $\|\cdot\|_\infty$ norm, the step of the grid $\widetilde\Theta_\varepsilon$ is the same as the step over each dimension, $\varepsilon$:
\begin{equation*}
\forall \tilde\theta\in\widetilde\Theta,\exists \tilde\theta_\varepsilon\in\widetilde\Theta_\varepsilon/\|\tilde\theta-\tilde\theta_\varepsilon\|_\infty\leq\frac{\varepsilon}{2}.
\end{equation*}
And the cardinal of $\widetilde\Theta_\varepsilon$ is at most 
\begin{equation*}
\prod_{i=1}^{D} \frac{(sup_{\theta\in\widetilde\Theta} \theta^i-inf_{\theta\in\widetilde\Theta} \theta^i)}{\varepsilon}\vee 1 \leq \left( \frac{\diam \widetilde\Theta}{\varepsilon} \right)^{D}\vee 1.  
\end{equation*}
Now, let $\theta_1$ and $\theta_2$ in $\Theta$ and $x\in\Rd$.
\begin{align*}
\bigl|\gamma(\theta_1;x)-\gamma(\theta_2;x)\bigr| &\leq \sup_{\theta\in [\theta_1;\theta_2]} \left\|\left(\frac{\partial \gamma}{\partial \theta}\right)_{(\theta;x)}\right\|_\infty \|\theta_1-\theta_2\|_\infty \\
& \leq \underbrace{\sup_{\phantom{[;\theta}\theta\in\Theta\phantom{\theta]}} \left\|\left(\frac{\partial \gamma}{\partial \theta}\right)_{(\theta;x)}\right\|_\infty}_{M'(x)} \|\theta_1-\theta_2\|_\infty,
\end{align*}
since $\Theta$ is convex. Let $\tilde\theta\in\widetilde\Theta$ and choose $\tilde\theta_\varepsilon\in\widetilde\Theta_\varepsilon$ such that $\|\tilde\theta-\tilde\theta_\varepsilon\|_\infty\leq\frac{\varepsilon}{2}$. Then 
\begin{gather*}
\text{\raisebox{-1em}{and}} 
\qquad \qquad \quad \quad \qquad \forall x\in\Rd, \bigl|\gamma(\tilde\theta_\varepsilon;x)-\gamma(\tilde\theta;x)\bigr| \leq M'(x)\frac{\varepsilon}{2} \qquad \qquad \quad \quad \\
\qquad \qquad \qquad \quad \quad \gamma(\tilde\theta_\varepsilon;x)-\frac{\varepsilon}{2}M'(x) \leq \gamma(\tilde\theta;x) \leq \gamma(\tilde\theta_\varepsilon;x)+\frac{\varepsilon}{2}M'(x).\qquad \qquad \qquad \quad 
\end{gather*}
The set of $\varepsilon \|M'\|_r$-brackets (for the $\|\cdot\|_r$-norm) 
\begin{equation*}
\left\{[\gamma(\tilde\theta_\varepsilon)-\frac{\varepsilon}{2}M';\gamma(\tilde\theta_\varepsilon)+\frac{\varepsilon}{2}M'] : \tilde\theta_\varepsilon\in\widetilde\Theta_\varepsilon \right\}
\end{equation*}
then has cardinal at most $\left( \frac{\diam \widetilde\Theta}{\varepsilon} \right)^{D}\vee 1$ and covers $\left\{ \gamma(\tilde\theta):\tilde\theta\in\widetilde\Theta\right\}$.
\end{proof}
\begin{Ex}[Diagonal Gaussian Mixture Model Parameter Space is Convex]\label{ex.diag.conv}
Following \cite{CelGov95}, we write $[p \lambda_k B_k]$ for the model of Gaussian mixtures with diagonal covariance matrices and equal mixing proportions. To keep simple notation, let us consider the case $d=2$ and $K=2$ ($d=1$ or $K=1$ are obviously particular cases!). A natural parametrization of this model (which dimension is $8$) is
\begin{equation*}
\theta\in\mathbb{R}^4\times{\mathbb{R}^{+*}}^4 \stackrel{\varphi}{\longmapsto} \inv{2} \phi\Biggl(\p \begin{pmatrix} \theta_1\\\theta_2 \end{pmatrix},\begin{pmatrix} \theta_5&0\\0&\theta_6 \end{pmatrix}\Biggr) + \inv{2} \phi\Biggl(\p \begin{pmatrix} \theta_3\\\theta_4 \end{pmatrix},\begin{pmatrix} \theta_7&0\\0&\theta_8 \end{pmatrix}\Biggr) 
\end{equation*}
Then $[p \lambda_k B_k] = \varphi(\mathbb{R}^4\times{\mathbb{R}^{+*}}^4)$ and the parameter space $\mathbb{R}^4\times{\mathbb{R}^{+*}}^4$ is convex. \supprok{Remark that with the constraint $\begin{pmatrix} \theta_1\\\theta_2\\\theta_5\\\theta_6 \end{pmatrix}\neq\begin{pmatrix} \theta_3\\\theta_4\\\theta_7\\\theta_8 \end{pmatrix}$, the parameter space $\Theta_K\subset\mathbb{R}^4\times{\mathbb{R}^{+*}}^4$ is not convex anymore. However, it can be, by constraining $\Theta$ even more, so as to avoid label switching, for example by ranking the means and covariances parameters from a component to another in lexicographical order: the parameter space is reduced, convex, and still maps through $\varphi$ onto the entire model.}
\end{Ex}
\begin{Ex}[The Same Model with Equal Volumes is Convex, too...]\label{ex.diag.convEq}
$[p \lambda B_k]$ is the same model as in the previous example, but the covariance matrices determinants have to be equal. With $d=2$ and $K=2$, a natural parametrization of this model of dimension $7$ is
\begin{multline*}
\theta\in\mathbb{R}^4\times{\mathbb{R}^{+*}}^3 \stackrel{\varphi}{\longmapsto} \inv{2} \phi\Biggl(\p \begin{pmatrix} \theta_1\\\theta_2 \end{pmatrix},\sqrt{\theta_7}\begin{pmatrix} \theta_5&0\\0&\inv{\theta_5} \end{pmatrix}\Biggr) \\ + \inv{2} \phi\Biggl(\p \begin{pmatrix} \theta_3\\\theta_4 \end{pmatrix},\sqrt{\theta_7}\begin{pmatrix} \theta_6&0\\0&\inv{\theta_6} \end{pmatrix}\Biggr) 
\end{multline*}
Then $[p \lambda B_k] = \varphi(\mathbb{R}^4\times{\mathbb{R}^{+*}}^3)$ and the parameter space $\mathbb{R}^4\times{\mathbb{R}^{+*}}^3$ is convex. 
\end{Ex}
\begin{proof}[Proof of Lemma \ref{le.icl.Sncomp}]
Let $O_1,\dots,O_Q$ be a finite covering of $\Theta$ consisting of open balls such that $\cup_{q=1}^Q O_q\subset \Theta^\mathcal{O}$. Such a covering always exists since $\Theta$ is assumed to be compact. Remark that 
\begin{equation*}
\Theta = \cup_{q=1}^Q (O_q\cap\Theta) \subset \cup_{q=1}^Q \conv{O_q\cap \Theta}.
\end{equation*}
Now, for any $q$, $\conv{O_q\cap \Theta}$ is convex and $\sup_{\theta\in\conv{O_q\cap \Theta}} \left\|\left(\frac{\partial \gamma}{\partial \theta}\right)_{(\theta;x)}\right\|_\infty \leq M'(x)$ since $\conv{O_q\cap \Theta}\subset O_q\subset\Theta^\mathcal{O}$. Therefore, Lemma \ref{le.icl.Snconv} applies to $O_q\cap\widetilde\Theta \subset \conv{O_q\cap \Theta}$:
\begin{equation*}
N_{[\,]}(\varepsilon,\{\gamma(\theta):\theta\in\widetilde\Theta\cap O_q\},\|\cdot\|_r) \leq \left( \frac{ \|M'\|_r\, \diam \widetilde\Theta}{\varepsilon} \right)^{D}\vee 1.
\end{equation*}
Since $N_{[\,]}(\varepsilon,\{\gamma(\theta):\theta\in\widetilde\Theta\},\|\cdot\|_r)\leq N_{[\,]}(\varepsilon,\cup_{q=1}^Q\{\gamma(\theta):\theta\in\widetilde\Theta\cap O_q\},\|\cdot\|_r)$, the result follows.
\end{proof}

\begin{proof}[Proof of Lemma \ref{le.icl.Snconv2}]
Consider the grid $\widetilde\Theta_\varepsilon$ of the proof of Lemma~\ref{le.icl.Snconv}. Let $\theta_1$ and $\theta_2$ in $\Theta$ and $x\in\Rd$. Since $\Theta$ is convex, 
\begin{align*}
\Bigl|\gamma(\theta_1;x)-\gamma(\theta_2;x)\Bigr|^r &\leq \sup_{\theta\in [\theta_1;\theta_2]} \left\|\left(\frac{\partial \gamma}{\partial \theta}\right)_{(\theta;x)}\right\|_\infty^2 \|\theta_1-\theta_2\|_\infty^2 \\
&\qquad \qquad \qquad \qquad \qquad  \times \bigl(2\sup_{\theta\in \{\theta_1,\theta_2\}}|\gamma(\theta;x)|\bigr)^{r-2} \\
&\leq M'(x)^2\|\theta_1-\theta_2\|_\infty^2 (2\|M\|_\infty)^{r-2} \quad f^\wp d\lambda \text{-a.e.}
\end{align*}
Let $\tilde\theta\in\widetilde\Theta$ and choose $\tilde\theta_\varepsilon\in\widetilde\Theta_\varepsilon$ such that $\|\tilde\theta-\tilde\theta_\varepsilon\|_\infty\leq\frac{\varepsilon}{2}$. Then 
\begin{equation*}
\Bigl|\gamma(\tilde\theta_\varepsilon;x)-\gamma(\tilde\theta;x)\Bigr| \leq M'(x)^\frac{2}{r} \left(\frac{\varepsilon}{2}\right)^\frac{2}{r} (2\|M\|_\infty)^\frac{r-2}{r} \quad f^\wp\text{-a.e.}
\end{equation*}
and the set of brackets 
\begin{sloppypar}
\begin{multline*}
\biggl\{ \Bigl[\gamma(\tilde\theta_\varepsilon;x)-\varepsilon^\frac{2}{r} M'(x)^\frac{2}{r} \|M\|_\infty^\frac{r-2}{r} 2^{1-\frac{4}{r}} ; \gamma(\tilde\theta_\varepsilon;x)+\varepsilon^\frac{2}{r} M'(x)^\frac{2}{r} \|M\|_\infty^\frac{r-2}{r} 2^{1-\frac{4}{r}}\Bigr] \\ : \tilde\theta\in\widetilde\Theta_\varepsilon \biggr\}
\end{multline*}
(of $\|\cdot\|_r$-norm length $(2^{2-\frac{4}{r}}) \|M\|_\infty^\frac{r-2}{r} \|M'\|_2^\frac{2}{r} \varepsilon^\frac{2}{r}$) has cardinal at most $\left( \frac{\diam \widetilde\Theta}{\varepsilon} \right)^{D}\vee 1$ and covers $\left\{ \gamma(\tilde\theta):\tilde\theta\in\widetilde\Theta\right\}$, which yields \supprok[29/08/09]{an analogous result as Lemma \ref{le.icl.Sncomp}, but with an upper bound on $N_{[\,]}$ as stated in }Lemma \ref{le.icl.Snconv2}.
\end{sloppypar}
\vspace{-1.5em}
\end{proof}
\begin{proof}[Proof of Theorem \ref{th.icl.consel}]
Let $\mathcal{K}=\argmin_{1\leq K\leq K_M} \E{f^\wp}{\gamma(\theta_K^0)}$. By assumption, $K_0=\min \mathcal{K}$. 

It is first proved that $\hat K$ does not asymptotically ``underestimate'' $K_0$. Let $K\notin\mathcal{K}$. Let $\varepsilon=\inv{2}\bigl(\E{f^\wp}{\gamma(\theta_K^0)}-\E{f^\wp}{\gamma(\theta_{K_0}^0)}\bigr)>0$. From \eqref{icl.b2} and \eqref{icl.b3} ($\pen(K)=\pop(1)$), with large probability and for $n$ large enough:
\begin{eqnarray*}
\bigl|\gamma_n(\hat\theta_K)-\E{f^\wp}{\gamma(\theta_K^0)}\bigr| \leq \frac{\varepsilon}{3}& 
\bigl|\gamma_n(\hat\theta_{K_0})-\E{f^\wp}{\gamma(\theta_{K_0}^0)}\bigr| \leq \displaystyle \frac{\varepsilon}{3}& \pen(K_0)\leq\frac{\varepsilon}{3}.
\end{eqnarray*}
Then
\begin{multline*}
\crit(K) = \gamma_n(\hat\theta_K) + \pen(K) \geq \E{f^\wp}{\gamma(\theta_K^0)} - \frac{\varepsilon}{3} + 0\\
= \E{f^\wp}{\gamma(\theta_{K_0}^0)} + \frac{5\varepsilon}{3} \geq \underbrace{\gamma_n(\hat\theta_{K_0}) + \pen(K_0)}_{\crit(K_0)} + \varepsilon.
\end{multline*}
Then, with large probability and for $n$ large enough, $\hat K \neq K$.

Let now $K\in\mathcal{K}$ (hence $K>K_0$). This part of the result is more involved than the first one but at this stage, it is not more difficult to derive: all the difficulty is hidden in the strong assumption \eqref{icl.b4}... Indeed, it implies that $\exists V>0,$ such that for $n$ large enough and with large probability,
\begin{equation*}
n\bigl(\gamma_n(\hat\theta_{K_0}) - \gamma_n(\hat\theta_K)\bigr)\leq V.
\end{equation*}
Increase $n$ enough so that $n\bigl(\pen(K)-\pen(K_0)\bigr)>V$ with large probability (which is possible from assumption \eqref{icl.b4}). Then, for $n$ large enough and with large probability,
\begin{equation*}
\crit(K) = \gamma_n(\hat\theta_K) + \pen(K) \geq \gamma_n(\hat\theta_{K_0}) - \frac{V}{n} + \pen(K) > \crit(K_0).
\end{equation*}
And then, with large probability and for $n$ large enough, $\hat K \neq K$.

Finally, since $\mathbb{P}[\hat K\neq K_0] = \sum_{K\notin\mathcal{K}} \mathbb{P}[\hat K = K] + \sum_{K\in\mathcal{K},\ K\neq K_0} \mathbb{P}[\hat K = K]$, the result follows.
\end{proof}

\begin{proof}[Proof of Lemma \ref{le.icl.1}] For any $\varepsilon>0$, with large probability and for $n$ large enough:
\begin{multline*}
\underbrace{\gamma_n(\hat\theta)-\mathbb E_{f^\wp}{\bigl[ \gamma(\hat\theta)\bigr]}}_{\geq -\varepsilon} + \underbrace{\mathbb E_{f^\wp}{\bigl[ \gamma(\hat\theta)\bigr]} - \E{f^\wp}{\gamma(\theta^0)}}_{\geq 0} = \gamma_n(\hat\theta) - \E{f^\wp}{\gamma(\theta^0)} \\
= \underbrace{\gamma_n(\hat\theta)-\gamma_n(\theta^0)}_{\leq\varepsilon} + \underbrace{\gamma_n(\theta^0) - \E{f^\wp}{\gamma(\theta^0)}}_{\leq\varepsilon}.
\end{multline*}
\vspace{-4.1em}\\
\end{proof}
\begin{proof}[Proof of Lemma \ref{le.icl.Sn}]
Actually, the proof as it is written below holds for an at most countable model (because this assumption is necessary for Lemma 4.23 and Theorem 6.8 in \citet{Mas07} to hold). But it can be checked that both those results may be applied to a dense subset of $\{\gamma(\theta) : \theta\in\Theta\}$ containing $\theta^0$ and their respective conclusions generalized to the entire set: choose $\Theta^\text{count}$ a countable dense subset of $\Theta$. Then, for any $\theta\in\Theta$, let $\theta_n\in\Theta^\text{count}\xrightarrow[n\rightarrow \infty]{}\theta$. Then, $\gamma(\theta_n;X) \xrightarrow[n\rightarrow \infty]{a.s.} \gamma(\theta;X)$. Now, whatever $g:\mathbb{R}^D\times\bigl({\Rd}\bigr)^n\rightarrow \mathbb{R}$ such that $\theta\in\mathbb{R}^D\mapsto g(\theta,{\bf X})$ continue a.s., $\sup_{\theta\in\Theta} g(\theta;{\bf X}) = \sup_{\theta\in\Theta^\text{count}} g(\theta;{\bf X})$ a.s. Hence, $\E{f^\wp}{\sup_{\theta\in\Theta} g(\theta;{\bf X})} = \E{f^\wp}{\sup_{\theta\in\Theta^\text{count}} g(\theta;{\bf X})}$. Remark however that the models which are actually considered are discrete, because of the computation limitations.

Let us introduce the centered empirical process 
\begin{equation*}
S_n \gamma(\theta) = n\gamma_n(\theta) - n\E{f^\wp}{\gamma(\theta;X)}.
\end{equation*}
Here and hereafter, $\alpha$ stands for a generic absolute constant, which may differ from a line to an other. 
Let $\theta^0\in\Theta$ such that $\E{f^\wp}{\gamma(\theta^0)}=\underset{\theta\in\Theta}{\inf} \E{f^\wp}{\gamma(\theta)}$. Let us define 
\begin{equation*}
\forall \sigma>0, \Theta(\sigma)=\{\theta\in\Theta : \|\theta-\theta^0\|_\infty \leq \sigma \}.
\end{equation*} 
On the one hand, for all $r\in\mathbb{N}^*\backslash\{1\},$
\begin{equation*}
\forall \theta\in\Theta(\sigma),
\left|\gamma(\theta^0;x)-\gamma(\theta;x)\right|^r \leq M'(x)^2 \|\theta^0-\theta\|_\infty^2 (2M(x))^{r-2} 
\end{equation*}
since $\Theta(\sigma)\subset\Theta$ is convex. And thus,
\begin{equation}\label{eq.icl.lip}
\begin{split}
\forall \theta\in\Theta(\sigma),\E{f^\wp}{|\gamma(\theta^0)-\gamma(\theta)|^r} &\leq \|M'\|_2^2\, \|\theta^0-\theta\|_\infty^2 (2\|M\|_\infty)^{r-2} \\
&\leq \frac{r!}{2} (\|M'\|_2 \sigma)^2 \left( \frac{\not{\hspace{-.2em}2}\|M\|_\infty}{\not{\hspace{-.2em}2}} \right)^{r-2}.
\end{split}
\end{equation}
On the other hand, from Lemma \ref{le.icl.Snconv2}, for any $r\in\mathbb{N}^*\backslash\{1\}$, for any $\delta>0$, there exists $C_\delta$ a set of brackets which cover $\{(\gamma(\theta^0)-\gamma(\theta)) : \theta\in\Theta(\sigma)\}$ (deduced from a set of brackets which cover $\{\gamma(\theta) : \theta\in\Theta(\sigma)\}$...) such that:
\begin{equation*}
\forall r\in\mathbb{N}^*\backslash\{1\}, \forall [g_l,g_u]\in C_\delta, \|g_u-g_l\|_r \leq \left(\frac{r!}{2}\right)^\inv{r} \delta^\frac{2}{r} \left( \frac{4 \|M\|_\infty}{3} \right)^\frac{r-2}{r}
\end{equation*}
and such that, writing $\exp{H(\delta,\Theta(\sigma))}$ the minimal cardinal of such a $C_\delta$, 
\begin{equation}\label{eq.icl.Hub}
\exp{H(\delta,\Theta(\sigma))} \leq \left( \frac{\overbrace{\diam \Theta(\sigma)}^{\quad \leq 2\sigma} \|M'\|_2}{\delta} \right)^{D}\vee 1.
\end{equation}
Then, according to Theorem 6.8 in \citet{Mas07}, \\
$\exists \alpha, \forall \varepsilon \in ]0,1], \forall A \text{ measurable such that } \mathbb{P}[A]>0,$
\begin{multline}\label{eq.icl.proth6}
\mathbb{E}^A \left[ \sup_{\theta\in\Theta(\sigma)} S_n\bigl(\gamma(\theta^0)-\gamma(\theta)\bigr) \right] \leq \frac{\alpha}{\varepsilon}\sqrt{n}\int_0^{\varepsilon\|M'\|_2\sigma} \sqrt{H\bigl(u,\Theta(\sigma)\bigr)}\ du \\
\shoveright{+ 2 \Bigl(\frac{4}{3} \|M\|_\infty + \|M'\|_2\sigma\Bigr) H\bigl(\|M'\|_2 \sigma,\Theta(\sigma)\bigr)}\\
+(1+6\varepsilon)\|M'\|_2\sigma\sqrt{2n\log\inv{\mathbb{P}[A]}} + \frac{8}{3}\|M\|_\infty \log\inv{\mathbb{P}[A]}\cdot
\end{multline}
Now, we have
\begin{multline*}
\forall t\in\mathbb{R}^+, \int_0^t \sqrt{\log\inv{u}\vee 0}\ du =\int_0^{t\wedge 1} \sqrt{\log\inv{u}}\ du \\
\hspace{-1em}\leq \sqrt{t\wedge 1} \sqrt{\int_0^{t\wedge 1} \log \inv{u}\ du} = (t\wedge 1) \sqrt{\log \frac{e}{t\wedge 1}},
\end{multline*}
by the Cauchy-Schwarz inequality. Together with \eqref{eq.icl.Hub}, this yields
\begin{equation}\label{eq.icl.pro2}
\begin{split}
\forall t\in\mathbb{R}^+, \int_0^t \sqrt{H(u,\Theta(\sigma))} du &\leq \sqrt{D} \int_0^t \sqrt{\log\frac{2 \|M'\|_2\sigma}{u}\vee 0}\ du\\
&\leq  \sqrt{D} \bigl(t\wedge 2\|M'\|_2\sigma\bigr) \sqrt{\log\frac{e}{\frac{t}{2\|M'\|_2\sigma}\wedge 1}}, 
\end{split}
\end{equation}
after a simple substitution.\\
Next, let us apply Lemma 4.23 in \citet{Mas07}: From \eqref{eq.icl.Hub}, \eqref{eq.icl.proth6} and \eqref{eq.icl.pro2}, 
\begin{equation*}
\forall \sigma>0, \E{f^\wp}{\sup_{\theta\in\Theta(\sigma)} S_n\bigl(\gamma(\theta^0)-\gamma(\theta)\bigr)} \leq \varphi(\sigma),
\end{equation*}
\begin{multline*}
\text{with }\varphi(t)=\frac{\alpha}{\not{\hspace{-.2em}\varepsilon}} \sqrt{n} \sqrt{D} \not{\hspace{-.2em}\varepsilon}\, \|M'\|_2 t \sqrt{\log\frac{2e}{\varepsilon}} + 2\Bigl(\frac{4}{3}\|M\|_\infty+\|M'\|_2 t\Bigr)D \log 2\\
+ (1+6\varepsilon)\|M'\|_2 t \sqrt{2n\log \inv{\mathbb{P}[A]}} + \frac{8}{3} \|M\|_\infty \log \inv{\mathbb{P}[A]}.
\end{multline*}
As required for Lemma 4.23 in \citet{Mas07} to hold, $\frac{\varphi(t)}{t}$ is nonincreasing. It follows
\begin{equation*}
\forall \beta>0, \mathbb{E}^A\left[ \sup_{\theta\in\Theta} \frac{S_n\bigl(\gamma(\theta^0)-\gamma(\theta)\bigr)}{\|\theta^0-\theta\|_\infty + \beta^2} \right] \leq 4 \beta^{-2} \varphi(\beta).
\end{equation*}
We then choose $\varepsilon=1$ and apply Lemma 2.4 in \citet{Mas07}: for any $\eta>0$ and any $\beta>0$, with probability larger than $1-\exp{-\eta}$,
\begin{multline*}
\sup_{\theta\in\Theta} \frac{S_n\bigl(\gamma(\theta^0)-\gamma(\theta)\bigr)}{\|\theta^0-\theta\|_\infty^2 + \beta^2} \leq \frac{\alpha}{\beta^2} \biggl(\sqrt{nD} \|M'\|_2 \beta \sqrt{\log 2e}\\ + \bigl(\|M\|_\infty + \|M'\|_2 \beta\bigr)D \log 2
+ \|M'\|_2 \beta \sqrt{n\eta} + \|M\|_\infty \eta\biggr).
\end{multline*}
\vspace{-3.5em}\\
\end{proof}

\begin{proof}[Proof of Corollary \ref{cor.icl.sqrtcon}]
Let $\varepsilon>0$ such that $B(\theta^0,\varepsilon)\subset\Theta^\mathcal{O}$. Then, since $\hat\theta_n\xrightarrow{\mathbb{P}} \theta^0$, there exists $n_0\in\mathbb{N}^*$ such that, with large probability, for $n\geq n_0$, $\hat\theta_n\in B(\theta^0,\varepsilon)$. Now, $B(\theta^0,\varepsilon)$ is convex and the assumptions of the corollary guarantee that Lemma \ref{le.icl.Sn} applies. Let us apply it to $\hat\theta_n$: 
$\forall n\geq n_0, \forall \beta>0, \text{ with great probability as $\eta$ is large},$
\begin{multline}\label{eq.icl.pro3}
\frac{S_n\bigl(\gamma(\theta^0)-\gamma(\hat\theta_n)\bigr)}{\|\theta^0-\hat\theta_n\|_\infty^2 + \beta^2} \leq \frac{\alpha}{\beta^2} \bigg(\sqrt{nD} \|M'\|_2 \beta + \bigl(\|M\|_\infty + \|M'\|_2 \beta\bigr)D \\
+ \|M'\|_2 \beta \sqrt{n\eta} + \|M\|_\infty \eta\biggr). 
\end{multline}
But since $I_{\theta^0}$ is supposed to be nonsingular, $\forall \theta\in B(\theta^0,\varepsilon)$, 
\begin{equation*}
\begin{split}
\E{f^\wp}{\theta}-\E{f^\wp}{\theta^0} = (\theta-\theta^0)' I_{\theta^0} (\theta-\theta^0) + r(\|\theta-\theta^0\|_\infty)\|\theta-\theta^0\|_\infty^2 \\
\geq \bigl(2\alpha' + r(\|\theta-\theta^0\|_\infty) \bigr) \|\theta-\theta^0\|_\infty^2
\end{split}
\end{equation*}
where $\alpha'>0$ depends on $I_{\theta^0}$ and $r:\mathbb{R}^+\longrightarrow\mathbb{R}$ fulfills $r(x)\xrightarrow[x\rightarrow 0]{}0$. 
Then, for $\|\theta-\theta^0\|_\infty$ small enough ($\varepsilon$ may be decreased...), 
\begin{equation}\label{eq.icl.pro4}
\forall \theta\in B(\theta^0,\varepsilon), \E{f^\wp}{\theta}-\E{f^\wp}{\theta^0}\geq \alpha' \|\theta-\theta^0\|_\infty^2.
\end{equation}
\vspace{-2em}
\begin{align*}
\tag*{Since } S_n\bigl(\gamma(\theta^0)-\gamma(\hat\theta_n)\bigr) &= n\bigl(\gamma_n(\theta^0) - \gamma_n(\hat\theta_n)\bigr) + n \mathbb E_{f^\wp} \bigl[\gamma(\hat\theta_n) - \gamma(\theta^0) \bigr] \\
&\geq \gop(1) + n \mathbb E_{f^\wp} \bigl[ \gamma(\hat\theta_n) - \gamma(\theta^0) \bigr] ,
\end{align*}
\eqref{eq.icl.pro3} together with \eqref{eq.icl.pro4} leads (with great probability) to 
\begin{equation*}
\begin{split}
n\|\hat\theta_n - \theta^0\|_\infty^2 \leq \frac{\|M'\|_2 (\sqrt{n D} + \sqrt{\eta n} + D)\beta + \|M\|_\infty (D + \eta) + \gop(1)}{\frac{\alpha'}{\alpha} - \inv{n \beta^2}\left( \|M'\|_2 (\sqrt{n D} + \sqrt{\eta n} + D)\beta + \|M\|_\infty (D + \eta) \right)},
\end{split}
\end{equation*}
as soon as the denominator of the right-hand side is positive.
It then suffices to choose $\beta$ such that this condition is fulfilled and such that the right-hand side is upper-bounded by a quantity which does not depend on $n$ to get the result. Let us try $\beta=\frac{\beta_0}{\sqrt{n}}$ with $\beta_0$ independent of $n$:
\begin{equation*}
n\|\hat\theta_n - \theta^0\|_\infty^2 \leq \frac{\|M'\|_2 (\sqrt{D} + \sqrt{\eta} + D)\beta_0 + \|M\|_\infty (D + \eta) + \gop(1)}{\frac{\alpha'}{\alpha} - \inv{\beta_0^2}\left( \|M'\|_2 (\sqrt{D} + \sqrt{\eta}+ D)\beta_0 + \|M\|_\infty (D + \eta) \right)} \cdot
\end{equation*}
This only holds if the denominator is positive. Choose $\beta_0$ large enough so as to guarantee this, which is always possible. \supprok[30/08/09]{Increase moreover $n_0$ so as to guarantee that, with large probability, $\forall n\geq n_0, r(n\|\hat\theta_n-\theta^0\|_\infty^2)\leq \inv{2}$. This is possible since $\hat\theta_n\xrightarrow{\mathbb{P}}\theta^0$. }The result follows: with large probability and for $n$ larger than $n_0$, we have $n\|\hat\theta_n - \theta^0\|_\infty^2 = C \gop(1)$ 
with $C$ depending on $D$, $\|M\|_\infty$, $\|M'\|_2$, $I_{\theta^0}$ and $\eta$.
\end{proof}
\begin{proof}[Proof of Corollary \ref{cor.icl.nSn}]
This is a direct application of Corollary \ref{cor.icl.sqrtcon}. Let $K\in\mathcal{K}$:
$\E{f^\wp}{\gamma(\theta_K^0)}=\E{f^\wp}{\gamma(\theta_{K_0}^0)}$. $\Theta_K$ can be assumed to be convex: if it is not, $\hat \theta_K$ lies in $B(\theta^0_{K_0},\varepsilon)\subset\Theta^\mathcal{O}$ with large probability for large $n$ and $\Theta_K$ may be replaced by $B(\theta^0_{K_0},\varepsilon)$. According to Lemma \ref{le.icl.Sn}, with probability larger than $(1-\exp{-\eta})$ for $n$ large, with 
$\beta=\frac{\beta_0}{\sqrt{n}}$ for any $\beta_0>0$:
\vspace{-1.7em}
\begin{multline*}
S_n\bigl(\gamma(\theta_K^0)-\gamma(\hat\theta_K)\bigr) \leq  \alpha \frac{n\|\theta_K^0-\hat\theta_K\|_\infty^2 + \beta_0^2}{\beta_0^2} \biggl( \|M'\|_2 \Big(\sqrt{D_K} + \sqrt{\eta} + \overbrace{\frac{D_K}{\sqrt n}}^{\ \leq D_K}\Bigr)\beta_0 \\ + \|M\|_\infty (D_K + \eta)\biggr).
\end{multline*}
But, according to Corollary \ref{cor.icl.sqrtcon}, $n\|\theta_K^0-\hat\theta_K\|_\infty^2=\gop(1)$. Moreover, by definition,
\vspace{-0.3em}
\begin{equation*}
S_n\bigl(\gamma(\theta_K^0)-\gamma(\hat\theta_K)\bigr) = n\bigl(\gamma_n(\theta_K^0)-\gamma_n(\hat\theta_K)\bigr) + n\Bigl(\underbrace{\mathbb E_{f^\wp} \bigl[\gamma(\hat\theta_K)\bigr] -\E{f^\wp}{\gamma(\theta_K^0)}}_{\geq 0}\Bigr)\\
\end{equation*}
\vspace{-1em} \\
Thus, $
n\bigl(\gamma_n(\theta_K^0)-\gamma_n(\hat\theta_K)\bigr) = \gop(1).	
$
This holds for any $K\in\mathcal{K}$ and then in particular for $K_0$ and $K$. Besides, $\gamma_n(\theta_K^0)=\gamma_n(\theta_{K_0}^0)$ since, by assumption, $\gamma(\theta_K^0)=\gamma(\theta_{K_0}^0)$ $f^\wp d\lambda$-a.e. Hence $
n\bigl(\gamma_n(\hat\theta_{K_0})-\gamma_n(\hat\theta_K)\bigr) = \gop(1).	
$
\end{proof}


\bibliographystyle{apalike} 
\bibliography{/Users/JP/Maths/jp.bib} 

\paragraph{Acknowledgements} I am deeply grateful to G. Celeux (INRIA Saclay \^Ile-de-France and Universit\'e Paris-Sud), J.-M. Marin (Universit\'e Montpellier II) and P. Massart (Universit\'e Paris-Sud) for their essential help. 

\vfill

\flushright 
\noindent \emph{Jean-Patrick Baudry}
\\Universit\'e Pierre et Marie Curie - Paris VI \\
Bo\^ite 158, Tour 15-25, 2\textsuperscript{e} \'etage \\
4 place Jussieu, 75252 Paris Cedex 05\\ 
France.\\ 
Jean-Patrick.Baudry@upmc.fr\\
\url{http://www.lsta.upmc.fr/Baudry}

\end{document}